\newcommand{\T}{\mathbb{T}}
\newcommand{\Ht}{\mathscr{H}^2}
\newtheorem{thm}{Theorem}
\newtheorem*{thm*}{Theorem}
\newtheorem{lem}{Lemma}
\newtheorem*{lem*}{Lemma}
\newtheorem{prop}{Proposition}
\newcommand{\Da}{{\mathscr{D}}_\alpha}
\newcommand{\Hw}{{\mathscr{H}}_w}
\newcommand{\Cf}{{\mathscr{C}}_\Phi}
\newcommand{\D}{\mathbb D}
\newcommand{\C}{\mathbb C}
\newcommand{\N}{\mathbb N}
\newcommand{\R}{\mathbb R}
\newcommand{\Z}{\mathbb Z}
\begin{document}
\author{Jing Zhao} \address{Jing Zhao\\Department of Mathematical Sciences \\
Norwegian University of Science and Technology \\
NO-7491 Trondheim \\
Norway} \email{jing.zhao@ntnu.no; jingzh95@gmail.com}
\title{Iteration of composition operators  \\  on small Bergman spaces of Dirichlet series}
\thanks{ 2010 Mathematics Subject Classification. 47B33, 30B50, 11N37.}  \thanks{The author is supported by the Research Council of Norway grant 227768.}
\maketitle
\begin{abstract}
The Hilbert spaces $\mathscr{H}_{w}$ consisiting of Dirichlet series  $F(s)=\sum_{ n = 1}^\infty a_n n^{ -s }$ that satisfty $\sum_{ n=1 }^\infty  | a_n |^2/ w_n < \infty$, with $\{w_n\}_n$ of average order $\log_j n$ (the $j$-fold logarithm of $n$), can be embedded into certain small Bergman spaces. Using this embedding, we study the Gordon--Hedenmalm theorem on such $\Hw$ from an iterative point of view. By that theorem, the composition operators are generated by functions of the form $\Phi(s) = c_0s + \phi(s)$, where $c_0$ is a nonnegative integer and $\phi$ is a Dirichlet series with certain convergence and mapping properties. The iterative phenomenon takes place when $c_0=0$. It is verified for every integer $j\geqslant 1$, real $\alpha>0$ and $\{w_n\}_{n}$ having average order $(\log_j^+ n)^\alpha$ , that the composition operators map $\Hw$ into a scale of $\mathscr{H}_{w'}$ with $w_n'$ having average order $( \log_{j+1}^+n)^\alpha$. The case $j=1$ can be deduced from the proof of the main theorem of a recent paper of Bailleul and Brevig, and we adopt the same method to study the general iterative step.
\end{abstract}

\section{introduction}

Let $\Ht$ be the Hilbert space of Dirichlet series with square summable coefficients. A theorem of Gordon and Hedenmalm \cite{gh} classifies the set of analytic functions $\Phi: \C_{1/2}\to\C_{1/2}$ which generate composition operators that map $\Ht$ into itself. Let $\C_\theta$ denote the half-plane $\C_\theta:= \left\{ s=\sigma+it: \sigma>\theta \right\}$. The Gordon--Hedenmalm theorem reads as follows, in a slightly strengthened version found in \cite{strco}.
\begin{thm}[Gordon--Hedenmalm Theorem]
A function $\Phi: \C_{1/2}\to\C_{1/2}$ generates a bounded composition operator $\mathscr{C}_{\Phi}:\Ht\to\Ht$ if and only if $\Phi$ is of the form
\begin{equation}\label{phi}
\Phi(s) = c_0s + \sum_{n=1}^\infty c_n n^{-s} = c_0s + \phi(s),
\end{equation}
where $c_0$ is a nonnegative integer, and $\phi$ is a Dirichlet series that converges uniformly in $\C_\epsilon$ ($\epsilon>0$) with the following mapping properties:
\begin{enumerate}[label=(\roman*)]
\item If $c_0=0$, then $\phi(\C_0) \subset \C_{1/2}$;
\item If $c_0\geqslant 1$, then either $\phi\equiv0$ or $\phi(\C_0) \subset \C_0$.
\end{enumerate}
\end{thm}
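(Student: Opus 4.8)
The plan is to split the equivalence into its two implications and to transport everything through the Bohr correspondence $n^{-s}\leftrightarrow z^{\kappa(n)}$, under which $\Ht$ is isometrically the Hardy space $H^2(\T^\infty)$. Throughout I would lean on two facts: every $f\in\Ht$ is a Dirichlet series converging in $\C_{1/2}$ with $f(\sigma+it)\to a_1$ as $\sigma\to\infty$, and the Montgomery--Vaughan/Carlson mean value identity $\sum_m|b_m|^2 m^{-2\sigma}=\lim_{T\to\infty}\frac1{2T}\int_{-T}^T|g(\sigma+it)|^2\,dt$, valid in the half-plane of absolute convergence of $g=\sum b_m m^{-s}$.

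For necessity, assume $\Cf:\Ht\to\Ht$ is bounded. Feeding in the functions $f_n(s)=n^{-s}$ shows that $n^{-\Phi(s)}\in\Ht$ for every $n$, so each $n^{-\Phi}$ is a Dirichlet series convergent in $\C_{1/2}$. First I would extract the linear part: the limit $c_0:=\lim_{\sigma\to\infty}\Phi(\sigma+it)/\sigma$ exists and is independent of $t$ because $p^{-\Phi}$ and its derivative are governed by a genuine Dirichlet series, and then $\phi:=\Phi-c_0 s$ is shown to be a Dirichlet series converging uniformly in each $\C_\epsilon$ via Bohr's theorem (a somewhere-convergent Dirichlet series bounded in a half-plane converges uniformly slightly inside it). The integrality of $c_0$ is the crux: the frequencies occurring in $n^{-\Phi}=(n^{c_0})^{-s}\,n^{-\phi}$ are $c_0\log n$ shifted by nonnegative integer combinations of $\{\log k\}$, and since every such frequency must lie in $\{\log m:m\in\N\}$ for all $n$ simultaneously, $n^{c_0}$ is forced to be a positive integer for all $n$, whence $c_0\in\Z_{\geqslant0}$; positivity follows since $\Phi$ cannot decrease real parts unboundedly while staying in $\C_{1/2}$. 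Finally the mapping conditions: when $c_0\geqslant1$, the hypothesis $\Phi(\C_{1/2})\subset\C_{1/2}$ together with $\Real\,\phi$ being harmonic and bounded below forces $\phi(\C_0)\subset\C_0$ (or $\phi\equiv0$), and when $c_0=0$ one improves $\Phi(\C_{1/2})\subset\C_{1/2}$ to $\phi(\C_0)\subset\C_{1/2}$ by a Phragmén--Lindelöf/boundary argument exploiting the uniform convergence of $\phi$ up to $\C_0$.

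For sufficiency, assume the stated form and estimate $\|f\circ\Phi\|_{\Ht}^2$ through the mean value identity, writing it as $\lim_{\sigma\to0^+}\lim_{T\to\infty}\frac1{2T}\int_{-T}^T|f(\Phi(\sigma+it))|^2\,dt$. The key point is that the mapping conditions guarantee $\Real\,\Phi(\sigma+it)>1/2$ on the relevant region, so $f$ is evaluated where it is a bona fide convergent Dirichlet series and the local embedding $\int|f(1/2+it)|^2\,dt\lesssim\|f\|_{\Ht}^2$ applies. For $c_0\geqslant1$ the linear term $c_0 s$ dilates the vertical measure while $\phi$ is a bounded almost periodic perturbation, so a change of variables in the vertical integral (Littlewood subordination on $\C_{1/2}$) yields $\|f\circ\Phi\|_{\Ht}\leqslant C\|f\|_{\Ht}$ directly. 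For $c_0=0$ there is no dilation to exploit, so one must use the full gain $\phi(\C_0)\subset\C_{1/2}$: the uniform positive value of $\Real\,\phi-1/2$ over compact parts, combined with the pushforward-measure estimate, controls the vertical integral, and a density argument from Dirichlet polynomials extends the bound to all of $\Ht$.

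I expect the main obstacle to be the integrality of $c_0$ in the necessity direction and the borderline estimate when $c_0=0$ in the sufficiency direction. The former requires turning a statement about overlapping frequency sets of countably many Dirichlet series into the arithmetic conclusion $n^{c_0}\in\N$, which is exactly where the rigidity of the frequency lattice $\{\log m\}$ must be used carefully; the latter is delicate because the absence of vertical dilation leaves the mapping gain $\C_0\to\C_{1/2}$ as the only source of boundedness, so the pushforward-measure comparison has to be made quantitative precisely at the critical abscissa $1/2$.
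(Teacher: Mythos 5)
You should note first that the paper does not prove this statement at all: it is quoted as background from Gordon--Hedenmalm \cite{gh}, in the strengthened form of \cite{strco}, and the paper only reuses the $c_0=0$ part of that classical proof as the blueprint for its own Theorem \ref{main}. Measured against that classical proof, your plan has two genuine gaps. First, in the necessity direction you propose to obtain the mapping property $\phi(\C_0)\subset\C_{1/2}$ (case $c_0=0$) ``by a Phragm\'en--Lindel\"of/boundary argument exploiting the uniform convergence of $\phi$ up to $\C_0$.'' No argument of this kind can succeed, because the implication is false at the level of function theory alone: the Dirichlet polynomial $\phi(s)=\tfrac54+2^{-s}$ converges uniformly in all of $\C$ and maps $\C_{1/2}$ into $\C_{1/2}$ (there $\Real\phi\geqslant\tfrac54-2^{-1/2}>\tfrac12$), yet $\Real\phi(\sigma+i\pi/\log 2)=\tfrac54-2^{-\sigma}\to\tfrac14$ as $\sigma\to0^+$, so $\phi(\C_0)\not\subset\C_{1/2}$. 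The mapping property on the larger half-plane is therefore not a consequence of analyticity, somewhere/uniform convergence and $\Phi(\C_{1/2})\subset\C_{1/2}$; it must be extracted from the boundedness of $\Cf$ itself, tested against functions with natural boundaries --- this is exactly the role of $F(s)=\sum_{p}\,p^{-1/2}(\log p)^{-1}p^{-s}$ and its vertical limits $F_\chi$ (the paper's Lemma \ref{ext}, inherited from \cite{gh}), for which your sketch offers no substitute.

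Second, in the sufficiency direction for $c_0=0$ you rely on ``the uniform positive value of $\Real\,\phi-1/2$ over compact parts.'' There is no such uniform gap: $\phi(s)=\tfrac32-2^{-s}$ maps $\C_0$ into $\C_{1/2}$ with $\inf_{\C_0}\Real\phi=\tfrac12$, and a bound over compact sets can never yield the estimate on $\lim_{T\to\infty}\tfrac{1}{2T}\int_{-T}^{T}|f(\phi(\sigma+it))|^2\,dt$ uniformly in $\sigma>0$ that boundedness of $\Cf$ requires. The mechanism that actually works --- and that the paper generalizes in Lemmas \ref{contiembed} and \ref{ subordv } --- is Littlewood subordination (equivalently, a harmonic-majorant argument) in a conformally invariant Hardy space on $\C_{1/2}$, combined with the local embedding of $\Ht$ into that space and with the vertical limits $\Phi_\chi$, $\chi\in\T^\infty$, averaged via Carlson's identity; you invoke subordination only in the $c_0\geqslant1$ case, where moreover your description of $\phi$ as ``a bounded almost periodic perturbation'' fails (for instance $\phi(s)=(1+2^{-s})/(1-2^{-s})=1+2\sum_{k\geqslant1}(2^k)^{-s}$ maps $\C_0$ onto $\C_0$ but is unbounded). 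By contrast, your route to the integrality of $c_0$ --- frequency rigidity of $n^{-\Phi}$ forcing $n^{c_0}\in\N$ for every $n$, hence $c_0\in\Z_{\geqslant 0}$ --- is essentially the correct classical argument, as is your use of Bohr's theorem to get uniform convergence of $\phi$ in each $\C_\epsilon$.
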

The set of such $\Phi$ is called the Gordon--Hedenmalm class and denoted by $\mathscr G$. With the same convergence and mapping properties, the Gordon--Hedenmalm theorem was extended to the weighted Hilbert spaces $\Da$ which consists of Dirichlet series $F(s)=\sum_{ n = 1}^\infty a_n n^{ -s }$ that satisfy  $\sum_{ n=1 }^\infty  | a_n |^2 / d(n)^\alpha < \infty$  in \cite{bb}. Here $d(n)$ is the divisor function which counts the number of divisors of $n$ and $\alpha>0$. In particular, for $c_0=0$, the composition operators map $\Da$ into $\mathscr D_\beta$  with $\beta=2^\alpha-1$.
It should be noticed that $\mathscr D_\beta$ are spaces that are smaller than $\Da$ when $0<\alpha<1$ and bigger when $\alpha>1$. 

We observe from the proof of this extension (see \cite[Theorem 1]{bb}) that $\Da$ are actually mapped into weighted Hilbert spaces that consist of Dirichlet series $F(s)=\sum_{ n = 1}^\infty a_n n^{ -s }$ satisfying  \[\sum_{ n=1 }^\infty  | a_n |^2 / (1+\Omega(n))^\beta < \infty,\] where $\Omega(n)$ is the number of prime factors of $n$ (counting multiplicities). We say that an arithmetic function $f$ has average order $g$ if $\frac{ 1 }{ X } \sum_{n\leqslant X} f(n) \asymp g(X) $. Since $d(n)^\alpha$ has average order $(\log n)^{\beta}$ and $\Omega(n)^\beta$ has average order $(\log\log n)^\beta$ (see Proposition \ref{higherorder}), $\Da$ are in fact mapped into smaller weighted Hilbert spaces.  In this paper, we show that the phenomenon of gaining one more fold of the logarithm persists for more general weights that have average order $(\log_j n)^\alpha$ with $j\in\N$ and real $\alpha>0$. 

Let $\log_j x$ denote the $j$-fold logarithm of $x$ so that $\log_1 x=\log x$ and $\log_j x=\log_{j-1}(\log x)$. For convenience, we define $\log_0 x :=x$ and
\[\log^+ |x| := \max \big\{ |x|,0 \big\};\qquad \log_j^+|x| := \log^+\left(\log_{j-1}\right)^+|x|,\quad j\geqslant 2. \] 
Define
\begin{equation}
\label{hw}
 \Hw := \left\{ F(s)=\sum_{ n=1 }^\infty a_n n^{ -s } :\quad \| F  \|_{ \Hw }^2 := \sum_{ n=1 }^\infty \frac{ | a_n |^2 }{ w_n } < \infty \right\}.
\end{equation}
For every real number $\alpha>0$ and integer $j\geqslant 0$, let
\[\mathscr{H}_{\log,0} := \left\{ F(s)=\sum_{ n=1 }^\infty a_n n^{ -s } :\quad  \| F  \|_{  \mathscr H_{\log, j} }^2 = \sum_{ n=1 }^\infty \frac{ | a_n |^2 }{ \left( 1 + \Omega(n)  \right)^\alpha } < \infty \quad  \right\} =:\mathscr H_\Omega\]
\[  \mathscr H_{\log, j} := \left\{ F(s)=\sum_{ n=1 }^\infty a_n n^{ -s } :\quad  \| F  \|_{  \mathscr H_{\log, j} }^2 = \sum_{ n=1 }^\infty \frac{ | a_n |^2 }{ \left( 1 + \log_{j}^+ \Omega(n)  \right)^\alpha } < \infty \quad  \right\},\quad j\geqslant 1.\]
Our main result reads as follows.
\begin{thm}\label{main}
Let $\alpha>0$ be a real number and $j\geqslant 1$ be an integer.  When the weight $\{w_n\}_n$ of $\Hw$ has average order $(\log_j^+ n)^\alpha$, a function $\Phi: \C_{1/2} \to \C_{1/2}$ of the form defined in \eqref{phi} with $c_0=0$ generates a composition operator $\Cf: \Hw \to \mathscr H_{\log, j-1}$ if and only if $\Phi\in\mathscr G$.
\end{thm}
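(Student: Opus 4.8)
The plan is to establish the two directions separately, with the "if" direction being the substantive one. For the "only if" direction, I would argue that if $\Cf$ is bounded from $\Hw$ into $\mathscr{H}_{\log,j-1}$, then in particular it maps $\Hw$ into the space $\Ht$ (or at least into a space of convergent Dirichlet series on which the Gordon--Hedenmalm framework applies), and since $\Hw$ contains all the reproducing kernels and the test functions $n^{-s}$, the membership forces $\Phi$ to have the structural form \eqref{phi} with the stated mapping properties. More concretely, the symbol $\Phi$ must itself arise as $\Cf$ applied to the identity-like generators, and the characterization of $\mathscr G$ via the Gordon--Hedenmalm theorem is robust under passing to these weighted spaces because the weights are all of polynomial-in-$\Omega$ type and hence comparable on the relevant scales; thus the necessity of $\Phi\in\mathscr G$ follows from the classical statement together with a density/approximation argument showing that boundedness into the smaller space implies the same mapping constraints.

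The heart of the matter is the "if" direction: assuming $\Phi\in\mathscr G$ with $c_0=0$, I must show $\Cf$ maps $\Hw$ boundedly into $\mathscr{H}_{\log,j-1}$. The strategy, following Bailleul and Brevig, is to exploit the embedding of $\Hw$ into a small Bergman space alluded to in the abstract, transferring the problem to a Bergman-space estimate where the composition operator can be controlled by a Littlewood--Paley or Carleson-measure type argument. First I would use the embedding to realize $\|\Cf F\|_{\mathscr{H}_{\log,j-1}}$ in terms of an integral norm of $F\circ\Phi$ against a measure encoding the weight $(1+\log_{j-1}^+\Omega)^\alpha$. The key analytic input is the relation, provided by Proposition \ref{higherorder}, between the arithmetic weights built from $\Omega(n)$ and the analytic weights of average order $(\log_j^+ n)^\alpha$: one fold of logarithm is gained precisely because the average order of $(1+\log_{j-1}^+\Omega(n))^\alpha$ is $(\log_j^+ n)^\alpha$, matching the target space. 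I would then reduce the mapping property $\phi(\C_0)\subset\C_{1/2}$ to a quantitative bound on how $\Phi$ pulls back the defining measure, so that the pushforward measure under $\Phi$ is dominated by the measure defining the target Bergman space.

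Carrying out the iterative step, I would set up the induction on $j$, taking the case $j=1$ as the base case extracted from the proof of the main theorem in \cite{bb}, where $\Da$ is mapped into $\mathscr{H}_\Omega$. The inductive passage from $j$ to $j+1$ replaces the weight $\Omega$ by $\log_{j-1}^+\Omega$ and amounts to verifying that the same Bergman-space machinery applies verbatim once the weights are recalibrated by one further logarithm; the crucial point is that the embedding and the Carleson-type estimate depend on the weight only through its average order, which shifts by exactly one logarithmic fold. The main obstacle I anticipate is controlling the distortion of the weight under composition: I must show that the arithmetic weight $(1+\log_{j-1}^+\Omega(n))^\alpha$ interacts correctly with the multiplicativity structure of Dirichlet series under $\Phi$, i.e. that composing with $\phi$ does not disrupt the average-order computation that produces the extra logarithm. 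This requires carefully tracking how $\Omega$ behaves along the image of $\C_0$ under $\Phi$ and ensuring the relevant measure estimates are uniform; the slow variation of the iterated logarithm should make the error terms negligible, but making this rigorous for all $j$ simultaneously, rather than for a single fixed fold, is the delicate part of the argument.
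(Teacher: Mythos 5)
Your proposal has the right skeleton --- embed $\Hw$ into a small Bergman space, control the composition there, inherit necessity from the classical results --- but it is missing the one idea that makes the sufficiency direction actually close, namely the vertical-limit (Bohr lift) mechanism. Your plan is to push $F\circ\Phi$ into a Bergman space over $\C_{1/2}$ and dominate a pushforward, Carleson-type measure. Even if such a domination were established, it would not finish the proof: the embedding of Lemma \ref{contiembed} goes only one way, so a bound on a half-plane Bergman norm of $F\circ\Phi$ gives no information about the coefficient sum $\sum_n |b_n|^2\left(1+\log_{j-1}^+\Omega(n)\right)^{-\alpha}$ that defines $\|F\circ\Phi\|_{\mathscr H_{\log,j-1}}$. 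The paper closes this loop differently: writing $F\circ\Phi(s)=\sum_n b_n n^{-s}$ and using Lemma \ref{fphichi}, one fixes $s\in\C_0$ and $\chi\in\T^\infty$ and views $\lambda\mapsto\Phi_{\lambda\chi}(s)$ as an analytic map $\D\to\C_{1/2}$, so that
\[ F\circ\Phi_{\lambda\chi}(s)=\sum_n b_n\,\lambda^{\Omega(n)}\chi(n)\,n^{-s} \]
is a power series in $\lambda$ whose Taylor index is $\Omega(n)$. Littlewood's subordination principle in $D_{\alpha,j}(\D)$, applied in the variable $\lambda$, together with the embedding applied to $F$, bounds its $D_{\alpha,j}(\D)$-norm; integrating over $\chi\in\T^\infty$ against Haar measure and using orthogonality of the characters $\chi(n)$ converts that bound into exactly $\sum_n|b_n|^2 n^{-2\sigma}\left(1+\log_{j-1}^+\Omega(n)\right)^{-\alpha}\ll\|F\|_{\Hw}^2$, and letting $\sigma\to0$ finishes. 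In particular, the extra fold of logarithm is produced by the index shift $n\mapsto\Omega(n)$ (the coefficient weight $\left(1+\log_{j-1}^+ m\right)^{-\alpha}$ of $D_{\alpha,j}(\D)$ evaluated at $m=\Omega(n)$), not by any average-order computation. This mechanism is precisely the ``delicate part'' you flag at the end of your proposal but leave unresolved.

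Three further points. First, you make Proposition \ref{higherorder} the key analytic input, but in the paper it is not used in the proof of Theorem \ref{main} at all: it only verifies that the target space $\mathscr H_{\log,j-1}$ is again an $\Hw$ whose weight has average order $(\log_{j+1}^+ n)^\alpha$, so that iterates of composition operators stay within the theorem's scope. Your statement of it is also off by one fold: the average order of $\left(1+\log_{j-1}^+\Omega(n)\right)^\alpha$ is $(\log_{j+1}^+ n)^\alpha$, not $(\log_j^+ n)^\alpha$ --- were it the latter, no logarithm would be gained at all. Second, the induction on $j$ buys nothing: there is no reduction of the case $j$ to the case $j-1$, and the paper proves all $j$ simultaneously, the only $j$-dependence entering through the measures $dv_{\alpha,j}$, the integral estimate of Lemma \ref{weightom}, and the resulting embedding. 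Third, in the necessity direction, your claim that boundedness into $\mathscr H_{\log,j-1}$ implies mapping into $\Ht$ is backwards, since $\mathscr H_{\log,j-1}$ contains $\Ht$ (its weights are $\geqslant 1$); the paper instead follows \cite{gh} and \cite{bb}, using the concrete function of Lemma \ref{ext}, which lies in every $\Hw$ with $w_n\geqslant 1$, to force the convergence and mapping properties of $\phi$.
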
 
There are a few things we should make clear. First, it is proved in Section \ref{averageorder} that the average order of $(\log_j^+\Omega(n))^\alpha$ is $(\log_{j+2}^+n)^\alpha$, so that iterates of $\Cf$ acting on $\Hw$ fit into the scope of this theorem. Second, it is natural to replace $\Da$ with $\Hw$ and  $w_n=d_{\alpha+1}(n)$ .  Here, when $\alpha$ is a positive integer, $d_\alpha(n)$ is the number of  representations of $n$ as a product of $\alpha$ integers, so $d_2(n)=d(n)$. For general $\alpha$, $d_\alpha(n)$ is the coefficient of the $n$th term of the Dirichlet series of $\zeta(s)^\alpha $, i.e.
\[ d_\alpha(n) =  \binom{ k_1+ \alpha -1 }{ k_1 } \cdots \binom{ k_r + \alpha -1 }{ k_r },\quad \text{ for $n=p_1^{k_1}\cdots p_r^{k_r}$ }.\] It can be checked that the proof of Theorem 1 of \cite{bb} carries through, so that $\Cf$ maps $\Hw$ with $w_n=d_{\alpha+1}(n)$ into $\mathscr H_\Omega$. Notice that $d_{\alpha+1}(n)$ has average order $(\log n)^\alpha$  \cite{anp} and $d(n)^\alpha$ has average order $(\log n)^{2^\alpha-1}$.\\

\section{Preliminaries}\label{pre}
In \cite{hls}, $\Ht$ was identified with a space of analytic functions on $\D^\infty\cap \ell^2(\N)$, where $\D^\infty$ is the infinite polydisk
\[\D^\infty := \left\{ z=(z_1,z_2,\dots),\quad |z_j|<1 \right\}.\]
This is obtained by using the Bohr lift of Dirichlet series that are analytic in $\C_{1/2}$, which is defined in the following way. Let
\begin{equation}\label{dseries} F(s)=\sum_{ n=1 }^\infty a_n n^{-s}, \quad\Re s>1/2.\end{equation}
We write $n$ as a product of its prime factors
\[ n =  p_1^{k_1} \cdots p_r^{k_r}, \]
where the $p_j$ are the primes in ascending order. We replace $p_j^{-s}$ with $z_j$, set $\kappa(n)=(k_1,\dots, k_r) $, and define the formal power series 
\[  \mathscr B F(z) = \sum_{n=1}^\infty a_nz^{ \kappa(n)  } \]
as the Bohr lift of $F$. 

For $\chi=( \chi_1,\chi_2, \dots  )\in\C^\infty$, we define a completely multiplicative function by requiring $\chi(n) = \chi^{\kappa(n)}$ when $n =   p_1^{k_1} \cdots p_r^{k_r}$ and $\kappa(n)=(k_1,\dots, k_r)$. For $\Phi$ of the form \eqref{phi} with $c_0=0$,
\[\Phi_{\chi}(s) = \phi_{\chi}(s) = \sum_{n=1}^\infty c_n \chi(n)n^{-s}.\]
\begin{lem}
Suppose that $\Phi\in\mathscr G$. Then $\Phi_\chi\in\mathscr G$ for any $\chi\in\overline{\D^\infty}$. 
\end{lem}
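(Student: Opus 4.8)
The plan is to treat the defined case $c_0=0$, so that $\Phi=\phi$ and $\Phi_\chi=\phi_\chi=\sum_{n\geqslant1}c_n\chi(n)n^{-s}$; showing $\phi_\chi\in\mathscr G$ then amounts to verifying that $\phi_\chi$ converges uniformly on every $\C_\epsilon$ and that $\phi_\chi(\C_0)\subset\C_{1/2}$. I would first dispose of the boundary characters $\omega\in\T^\infty$. Since the frequencies $\log p_j$ are rationally independent, Kronecker's theorem makes the set $\{(p_1^{-\im\tau},p_2^{-\im\tau},\dots):\tau\in\R\}$ dense in $\T^\infty$, so each $\phi_\omega$ is a limit, uniform on compact subsets, of the vertical translates $\phi(\cdot+\im\tau)$. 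Each translate maps $\C_0$ into $\C_{1/2}$ and is bounded by $M_\epsilon:=\sup_{\C_\epsilon}|\phi|<\infty$ on $\C_\epsilon$ (finite because uniform convergence on $\C_\epsilon$ forces boundedness there); hence $\phi_\omega(\C_0)\subset\overline{\C_{1/2}}$ and $|\phi_\omega|\leqslant M_\epsilon$ on $\C_\epsilon$. Bohr's theorem, which identifies the abscissa of uniform convergence with that of boundedness, then gives uniform convergence of $\phi_\omega$ on every $\C_\epsilon$, and the open mapping theorem upgrades $\overline{\C_{1/2}}$ to $\C_{1/2}$ once one notes that $\phi_\omega$ is nonconstant exactly when $\phi$ is, since $|\omega(n)|=1$.

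For a general $\chi\in\overline{\D^\infty}$ I would pass from the torus to the interior by a Poisson average. Writing $m$ for Haar measure on $\T^\infty$ and $P_{\chi_j}$ for the Poisson kernel at $\chi_j$, the product $dP_\chi:=\prod_j P_{\chi_j}\,dm$ is a probability measure supported on the distinguished boundary, with moments $\int_{\T^\infty}\omega(n)\,dP_\chi(\omega)=\prod_j\chi_j^{k_j}=\chi(n)$ coming from the reproducing identity $\int_\T\omega^k P_{\chi_j}(\omega)\,dm(\omega)=\chi_j^{\,k}$. Because $\sigma_u(\phi)\leqslant0$ forces $\sigma_a(\phi)\leqslant1/2$, the series $\sum c_n n^{-s}$ converges absolutely on $\C_{1/2}$, so term-by-term integration is justified there and produces the representation
\[\phi_\chi(s)=\int_{\T^\infty}\phi_\omega(s)\,dP_\chi(\omega),\qquad \Re s>1/2.\]
The right-hand side, call it $\Psi$, is analytic on all of $\C_0$ (dominated convergence, using $|\phi_\omega|\leqslant M_\epsilon$ on $\C_\epsilon$) and bounded by $M_\epsilon$ on each $\C_\epsilon$; thus $\Psi$ is a bounded analytic continuation of the Dirichlet series $\phi_\chi$ from $\C_{1/2}$ to every $\C_\epsilon$. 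Invoking Bohr's theorem once more, boundedness of this continuation forces $\sigma_u(\phi_\chi)\leqslant0$ and $\phi_\chi=\Psi$ throughout $\C_0$, which settles the convergence requirement.

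The mapping property then follows from the representation together with convexity: for $s\in\C_0$ and $\omega\in\T^\infty$ the boundary case gives $\Re\phi_\omega(s)>1/2$, so $\Re\phi_\chi(s)=\int_{\T^\infty}\Re\phi_\omega(s)\,dP_\chi(\omega)>1/2$, whence $\phi_\chi(\C_0)\subset\C_{1/2}$; the degenerate case in which $\phi_\chi$ collapses to its constant term is covered automatically, that constant being a $P_\chi$-average of values of real part $>1/2$. I expect the main obstacle to be precisely the convergence bookkeeping of the middle step: the Poisson identity is immediate only on the absolute-convergence region $\C_{1/2}$, and the real content is transporting boundedness of the continuation $\Psi$ on each $\C_\epsilon$ back into uniform convergence of $\phi_\chi$ there via Bohr's theorem, rather than the elementary convexity step that yields the mapping property. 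For $c_0\geqslant1$ one sets $\Phi_\chi:=c_0 s+\phi_\chi$ and runs the same argument with $\C_0$ in place of $\C_{1/2}$, the case $\phi\equiv0$ being trivial.
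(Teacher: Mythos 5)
Your argument is correct in substance, but note first that the paper itself contains no proof of this lemma: its entire ``proof'' is the citation to Lemma~8 of \cite{bb}. So your write-up is a self-contained reconstruction, and its architecture --- Kronecker/vertical-limit reasoning for unimodular characters $\omega\in\T^\infty$, then an averaging device to pass from the distinguished boundary to all of $\overline{\D^\infty}$, with Bohr's theorem converting boundedness of the analytic continuation on each $\C_\epsilon$ into the uniform-convergence requirement in the definition of $\mathscr G$ --- is exactly the kind of argument the cited literature runs. The Poisson representation $\phi_\chi(s)=\int_{\T^\infty}\phi_\omega(s)\,dP_\chi(\omega)$, established first on $\C_{1/2}$ via $\sigma_a\leqslant\sigma_u+1/2$ and then propagated to $\C_0$ by continuation, is a clean way to handle the interior case, and the convexity step for the mapping property is the right mechanism; you also cover the case $c_0\geqslant1$, which the paper's definition of $\Phi_\chi$ (given only for $c_0=0$) does not even require.

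Three points need tightening, all repairable. First, when some $|\chi_j|=1$ the factor $P_{\chi_j}\,dm$ does not exist as a density; it must be replaced by the Dirac mass $\delta_{\chi_j}$. The product probability measure and the moment identity $\int_{\T^\infty}\omega(n)\,dP_\chi(\omega)=\chi(n)$ survive, but the formula $dP_\chi=\prod_j P_{\chi_j}\,dm$ as written is only correct for $\chi\in\D^\infty$. Second, Kronecker's theorem gives density of the orbit $\{(p_1^{-\im\tau},p_2^{-\im\tau},\dots)\}$ in $\T^\infty$, i.e.\ coordinatewise approximation of $\omega$; this alone does not yield locally uniform convergence of the translates $\phi(\cdot+\im\tau_k)$ to $\phi_\omega$ on $\C_0$, since a priori the series $\phi_\omega$ is only known to converge on $\C_{1/2}$. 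You need the intermediate step: the translates are uniformly bounded by $M_\epsilon$ on each $\C_\epsilon$, so Montel produces a locally uniformly convergent subsequence, whose limit is identified with $\phi_\omega$ on $\C_{1/2}$ by termwise (dominated) convergence of the absolutely convergent series; only then is Bohr's theorem applicable. (Alternatively, use that sup norms of Dirichlet polynomials over half-planes are invariant under twisting by $\T^\infty$-characters, which transfers uniform convergence of the partial sums directly.) Third, the strict inequality $\Re\phi_\chi(s)>1/2$ deserves one more line: the integrand satisfies $\Re\phi_\omega(s)-1/2>0$ for \emph{every} $\omega$ (by open mapping when $\phi_\omega$ is nonconstant, and because $\phi_\omega=c_1$ with $\Re c_1>1/2$ when $\phi$ is constant, since $|\omega(n)|=1$ makes these the only cases), and an everywhere-positive measurable function has strictly positive integral against a probability measure; equivalently, apply the minimum principle (Harnack) to the nonnegative harmonic function $\Re\Psi-1/2$ on $\C_0$. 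With these details supplied, your proof stands as a legitimate alternative to simply invoking \cite{bb}.
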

\begin{proof}This was proved in \cite[Lemma 8]{bb}. 
\end{proof}
\begin{lem}\label{fphichi}
Suppose that $\Phi\in\mathscr G$ of the form \eqref{phi} with $c_0=0$. For every Dirichlet polynomial $F$, every $\chi\in\overline{\D^\infty}$ and every $s\in\C_0$, we have
\begin{equation}\label{ffk}
(F\circ \Phi)_\chi (s)  = (F \circ \phi_\chi) (s).
\end{equation}
\end{lem}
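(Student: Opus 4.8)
The plan is to reduce the claimed functional identity to a comparison of Dirichlet coefficients, exploiting that the character $\chi$ is completely multiplicative. Write $F(w)=\sum_{k=1}^N b_k k^{-w}$ and split $\phi(s)=c_1+\tilde\phi(s)$ with $\tilde\phi(s)=\sum_{n\geqslant2}c_n n^{-s}$. By linearity it suffices to treat a single term $k^{-\phi(s)}=k^{-c_1}\exp\!\big(-(\log k)\tilde\phi(s)\big)$. Expanding the exponential and collecting powers of $N^{-s}$ produces a formal Dirichlet series $k^{-\phi(s)}=\sum_N k^{-c_1} h_N(k)\, N^{-s}$, where each coefficient $h_N(k)=\sum_{m\geqslant0}\frac{(-\log k)^m}{m!}\sum_{n_1\cdots n_m=N,\ n_i\geqslant2}c_{n_1}\cdots c_{n_m}$ is a \emph{finite} sum, since a factorization of $N$ into factors $\geqslant 2$ forces $m\leqslant\log_2 N$. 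Summing over $k$ gives $F\circ\phi=\sum_N e_N N^{-s}$ with $e_N=\sum_k b_k k^{-c_1}h_N(k)$. As a guide and sanity check, note that for the vertical characters $\chi_t=(p_1^{-it},p_2^{-it},\dots)$ the claim is transparent, since then $\phi_{\chi_t}(s)=\phi(s+it)$ and \eqref{ffk} is just $(F\circ\phi)(s+it)=F(\phi(s+it))$.

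The key observation is that the twist $G\mapsto G_\chi$ is an algebra homomorphism for Dirichlet multiplication, because $\chi(mn)=\chi(m)\chi(n)$, and hence commutes with the exponential expansion above. Concretely, replacing each $c_n$ by $c_n\chi(n)$ turns $c_{n_1}\cdots c_{n_m}$ into $\chi(n_1\cdots n_m)c_{n_1}\cdots c_{n_m}=\chi(N)c_{n_1}\cdots c_{n_m}$ whenever $n_1\cdots n_m=N$, while the prefactor $k^{-c_1}$ coming from the constant term of $\phi$ is untouched since $\chi(1)=1$. Therefore the $N$-th Dirichlet coefficient of $F\circ\phi_\chi$ equals $\chi(N)e_N$; that is, $(F\circ\Phi)_\chi=(F\circ\phi)_\chi$ and $F\circ\phi_\chi$ have identical Dirichlet coefficients.

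To turn this formal identity into a functional one, I would first secure convergence where it is easy. Since $\phi\in\mathscr G$ converges uniformly in every $\C_\epsilon$, its abscissa of uniform convergence is at most $0$, so by Bohr's inequality its abscissa of absolute convergence is at most $1/2$; thus $A(\sigma):=\sum_{n\geqslant2}|c_n|n^{-\sigma}<\infty$ for $\sigma>1/2$. The triple sum over $k$, $m$ and the $n_i$ is then dominated by $\sum_k|b_k|k^{-\Re c_1}\,k^{A(\sigma)}$, which is finite, and the same bound survives the twist because $|\chi(n)|\leqslant1$. Hence for $\Re s>1/2$ every rearrangement is legitimate and $(F\circ\Phi)_\chi(s)=\sum_N\chi(N)e_N N^{-s}=F\big(\phi_\chi(s)\big)$, with absolute convergence.

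Finally I would extend the identity from $\C_{1/2}$ to all of $\C_0$. By the previous lemma $\phi_\chi\in\mathscr G$, so $\phi_\chi$ converges uniformly and is bounded on each $\C_\epsilon$ with $\phi_\chi(\C_0)\subset\C_{1/2}$; as $F$ is a polynomial, $F\circ\phi_\chi$ is bounded and analytic on each $\C_\epsilon$. Bohr's theorem, equating the abscissae of boundedness and of uniform convergence, then forces its Dirichlet series $\sum_N\chi(N)e_N N^{-s}$ to converge throughout $\C_0$. Since $(F\circ\Phi)_\chi$ is by construction this same series, it converges on $\C_0$ and agrees there with $F\circ\phi_\chi$, which is exactly \eqref{ffk}. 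I expect this last passage to be the main obstacle: one must know that the composed, twisted series genuinely converges on $\C_0$ rather than merely as a formal object, and the cleanest route is to invoke $\phi_\chi\in\mathscr G$ together with the boundedness/uniform-convergence theorem of Bohr, rather than to estimate the coefficients $e_N$ directly. An alternative to the whole argument is to upgrade the trivial vertical-character case by continuity and Kronecker density on $\T^\infty$, and then to the interior of $\overline{\D^\infty}$ by a maximum-principle argument in the variables $\chi_j$; but the coefficient approach has the advantage of handling all $\chi\in\overline{\D^\infty}$ at once through the single bound $|\chi(n)|\leqslant1$.
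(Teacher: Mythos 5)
Your argument is correct, but it takes a genuinely different route from the paper, because the paper does not prove this lemma from scratch at all: it simply cites Lemma 9 of \cite{bb}, which asserts $(F\circ \Phi)_\chi = F_{\chi^{c_0}} \circ \Phi_\chi$ for every $\Phi\in\mathscr G$, and observes that this collapses to \eqref{ffk} when $c_0=0$ (since then $\chi^{c_0}(n)\equiv 1$ and $F_{\chi^{c_0}}=F$). What you have produced is, in effect, a self-contained proof of the $c_0=0$ case of that cited result, and every step checks out: the expansion $k^{-\phi(s)}=k^{-c_1}\exp\bigl(-(\log k)\tilde\phi(s)\bigr)$ with coefficients $h_N(k)$ that are finite sums (because a factorization of $N$ into parts $\geqslant 2$ forces $m\leqslant \log_2 N$); the commutation of the twist with this expansion via complete multiplicativity, $\chi(n_1\cdots n_m)=\chi(n_1)\cdots\chi(n_m)$; absolute convergence of the triple sum in $\C_{1/2}$ via Bohr's inequality $\sigma_a\leqslant\sigma_u+1/2$, which legitimizes the rearrangements; and finally the passage from $\C_{1/2}$ to $\C_0$ via Lemma 1 ($\Phi_\chi\in\mathscr G$, hence $\phi_\chi(\C_0)\subset\C_{1/2}$ and $F\circ\phi_\chi$ is bounded and analytic on $\C_0$) together with Bohr's theorem equating the abscissae of boundedness and uniform convergence. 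You also correctly located the real content of the lemma: the identity is nearly trivial as formal Dirichlet series and easy in $\C_{1/2}$; the substance is that the twisted composed series converges in all of $\C_0$, which is exactly where Bohr's theorem and the mapping property of $\phi_\chi$ are needed. As for what each approach buys: the paper's citation is shorter and inherits the general-$c_0$ statement (where the extra twist $F_{\chi^{c_0}}$ genuinely appears), while your argument is self-contained, handles all $\chi\in\overline{\D^\infty}$ uniformly through the single bound $|\chi(n)|\leqslant 1$ (avoiding the vertical-limit/Kronecker route, which works most naturally only on $\T^\infty$), and makes explicit where each hypothesis of the Gordon--Hedenmalm class enters.
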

\begin{proof}It was proved in \cite[Lemma 9]{bb} that \[(F\circ \Phi)_\chi (s) = (F_{\chi^{c_0}} \circ \Phi_\chi) (s) \] whenever $\Phi\in\mathscr G$. This is reduced to \eqref{ffk} when $c_0=0$.
\end{proof}
We shall now introduce a scale of Bergman spaces over $\D$, as well as the corresponding Bergman spaces over $\C_{1/2}$ which are induced by a certain conformal mapping $\tau:\C_{1/2} \to \D$. 

Let $e_j:=\underbrace{ \exp(\exp(\cdots\exp(e))) }_{ \text{ $j$ e's} }$ ($e_0=1$). For $\alpha>0$ and $j\geqslant 1$, we define
\begin{align*}\label{dvj}
dv_{\alpha,1}(z) := & d A_\alpha(z) = \alpha( 1- |z|^2 )^{ \alpha - 1 } dA(z);\\
\\
dv_{\alpha,j}(z) : = & \frac{ \alpha }{1-|z|^2} \left(\prod_{\ell=1}^{j-2} \log_\ell \left( \frac{e_\ell}{1-|z|^2} \right) \right)^{-1} \left( \log_{j-1} \left( \frac{e_{j-1}}{1-|z|^2} \right) \right)^{-(\alpha+1)}dA(z),\quad j>1.
\\                            
\end{align*}
Let $D_{\alpha,j}(\D)$ be the set of functions $f$ that satisfy 
\[ \| f \|_{D_{\alpha,j}(\D)}^2 := \int_{\D} | f(z) |^2 dv_{\alpha,j}(z) < \infty.  \]
For $f(z)=\sum_{n=0}^\infty c_n z^n$, we have \[ \| f \|_{D_{\alpha,j}(\D)}^2 \asymp  \sum_{n=0}^\infty \frac{ |c_n|^2 }{ \left( 1 + \log_{j-1}^+ n \right)^\alpha }. \]
Let \begin{equation}\label{tau}\tau (s) =  \frac{s-3/2}{s+1/2}\end{equation} 
\\
which maps $\C_{1/2}$ to $\D$.

The measure $\mu_j(s)$ on $\C_{1/2}$ induced by $\tau$ is
\\
\begin{align*}
d\mu_1(s) = & 4^\alpha \alpha \left( \sigma-1/2   \right)^{ \alpha-1 } \frac{ dm(s) }{ | s+1/2 |^{ 2\alpha+2 } };\\
\\
d\mu_j(s) = &  \frac{ \alpha }{ (\sigma-1/2) }\left(\prod_{\ell=1}^{j-2} \log_\ell^+  \frac{ e_\ell | s+1/2 |^2 }{ 2(2\sigma-1) }  \right)^{-1} \left( \log_{j-1}^+  \frac{ e_{j-1} | s+1/2 |^2 }{ 2(2\sigma-1) }  \right)^{-(\alpha+1)}\frac{ dm(s) }{  | s+1/2 |^2 } ,\quad j>1.
\end{align*}
Finally, let $D_{\alpha,j,i}(\C_{1/2})$ consist of functions $F$ that are analytic in $\C_{1/2}$ such that
\begin{equation}\label{dabeta} \| F \|_{ D_{\alpha,j,i}(\C_{1/2}) }^2 := \int_{\C_{1/2}} | F(s) |^2 d\mu_j(s) < \infty \end{equation}
and
\[ D_{\alpha,1,i}(\C_{1/2}) =: D_{\alpha,i}(\C_{1/2}). \]
Then
\[ \| F \|_{ D_{\alpha,j,i}(\C_{1/2}) }^2 =  \int_{\D} | F\circ \tau^{-1}(z) |^2 dv_{\alpha,j}(z) = \|F\circ \tau^{-1} \|_{D_{\alpha,j}(\D)}^2\]
and
\[ D_{\alpha,1}(\D) =:D_\alpha(\D). \]

The proof of the main theorem will be given in Section \ref{lsp}. We  verify that the average order of  $(\log_j^+\Omega(n))^\alpha$ is $(\log_{j+2}^+ n)^\alpha$ in Section \ref{averageorder}.

\section{Proof of Theorem \ref{main}}\label{lsp}
As in \cite[Subsection 3.1]{bb}, we inherit the proof of the arithmetical condition of $c_0$ from \cite[Theorem A]{gh}. For the mapping and convergence properties of $\phi$, we follow Subsection 3.2 in \cite{bb} as well.

\begin{lem}\label{ext}
Assume that $w_n\geqslant 1$. There exists a function $F\in \Hw$ such that
\begin{enumerate}
\item For almost all $\chi\in\T^\infty$, $F_\chi$ converges in $\C_0$ and cannot be analytically continued to any larger domain;
\item For at least one $\chi\in\T^\infty$, $F_\chi$ converges in $\C_{1/2}$ and cannot be analytically continued to any larger domain.
\end{enumerate}
\end{lem}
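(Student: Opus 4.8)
The plan is to strip the statement of the weight entirely and reduce it to a single, weight-free existence problem, then realize two prescribed abscissae of convergence together with natural boundaries. The hypothesis $w_n \geqslant 1$ enters only through the inclusion it forces: since $1/w_n \leqslant 1$ we have $\|F\|_{\Hw}^2 = \sum_n |a_n|^2/w_n \leqslant \sum_n |a_n|^2 = \|F\|_{\Ht}^2$, so $\Ht$ embeds contractively into $\Hw$. Consequently it suffices to produce one $F \in \Ht$ for which conclusions (1) and (2) hold; membership in $\Hw$ is then automatic and uniform in the weight. This is exactly the situation treated by Bailleul and Brevig, and I would recall their construction in the form below, emphasizing that nothing beyond $w_n \geqslant 1$ is used.

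For the construction I would first secure the two abscissae cheaply. Write $\sigma_c(\chi)$ for the abscissa of convergence of $F_\chi$. Taking $F$ supported on the primes, $F(s) = \sum_p a_p p^{-s}$ with $a_p \asymp (p^{1/2}\log p)^{-1}$, one gets $\sum_p |a_p|^2 = \sum_p (p(\log p)^2)^{-1} < \infty$, so $F \in \Ht$; moreover $\sum_p |a_p|\,p^{-\sigma}$ converges exactly for $\sigma > 1/2$, whereas $\sum_p |a_p|^2 p^{-2\sigma}$ converges exactly for $\sigma > 0$. On the prime support the coordinates $\chi(p)$, $\chi\in\T^\infty$, are independent Steinhaus variables, so the second sum governs the generic behaviour: by Kolmogorov's convergence theorem $\sum_p a_p\chi(p)p^{-s}$ converges almost surely at each fixed $s \in \C_0$, and the theory of random Dirichlet series upgrades this to almost sure local uniform convergence throughout $\C_0$, whence $\sigma_c(\chi) = 0$ for almost every $\chi$. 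The first sum shows that no twist can converge beyond $\C_{1/2}$. This produces the Bohr gap of exactly $1/2$ between the generic abscissa $0$ and the extreme abscissa $1/2$.

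It remains to install the natural boundaries, and this is where the two conclusions diverge sharply in difficulty. Conclusion (1) is soft: a random Dirichlet series almost surely has its line of convergence as a natural boundary, so for almost every $\chi$ the phases $\chi(p)$ destroy any continuation and $F_\chi$ cannot be continued past $\Re s = 0$; together with the previous paragraph this is precisely (1). Conclusion (2), however, concerns a single exceptional twist lying in the complementary null set, where no probabilistic mechanism is available. The danger is concrete: for the aligned twist $\chi \equiv 1$ the series behaves like an antiderivative of the prime zeta function and therefore \emph{does} continue across $\Re s = 1/2$ (down to $\Re s = -1/2$), so the extreme abscissa alone does not give a natural boundary. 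This is the main obstacle: one must exhibit a twist $\chi_0$ with $\sigma_c(\chi_0)=1/2$ whose line of convergence is nonetheless a natural boundary, which forces one to break the arithmetic regularity of the nice coefficients above — e.g. by a gap/Fabry-type argument adapted to the frequencies $\log p$, or by building the exceptional realization into the coefficients from the outset. I would follow the same device as Bailleul and Brevig at exactly this point; everything preceding it is weight-independent, which is the whole content of the reduction.
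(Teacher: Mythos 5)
Your overall route is the paper's own: the hypothesis $w_n\geqslant 1$ enters only through the contractive inclusion $\Ht\subset\Hw$, the test function is the same prime-supported series $F(s)=\sum_p(\sqrt{p}\log p)^{-1}p^{-s}$, and the paper's entire proof consists of this inclusion plus a citation of \cite{gh} for properties (1) and (2). Your sketch of (1) (independence of the $\chi(p)$ thanks to the prime support, Kolmogorov convergence plus Kahane-type natural-boundary theorems) is a fair account of what stands behind that citation, and your remark that the aligned twist $\chi\equiv 1$ violates (2) is correct and worth making explicit: $F'(s)=-P(s+\tfrac12)$ with $P$ the prime zeta function, which, by the zero-free region for $\zeta$, is singular on the line $\Re s=1$ only at $s=1$, so $F$ itself continues across every point of $\Re s=\tfrac12$ except the single Landau point $s=\tfrac12$.

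The gap sits exactly where you place the weight of the proof: conclusion (2). You defer to ``the same device as Bailleul and Brevig,'' but \cite{bb} contains no such device; their version of this lemma is, just like the present paper's, proved by quoting \cite{gh}, where the exceptional character is actually produced (note that this paper cites \cite{bb} scrupulously for its Lemmas 1 and 2, but cites \cite{gh} for this one). So the crux of your argument points at a source that would send you straight back to the reference you were meant to replace, and your fallback suggestions do not repair this. A Fabry--P\'olya gap theorem requires frequencies of density zero, $k/\lambda_k\to 0$, whereas here $\lambda_k=\log p_k$ gives $k/\log p_k=\pi(p_k)/\log p_k\to\infty$; and if you instead pass to a density-zero sub-series of $F$, its coefficients are so small that its line of convergence --- hence the natural boundary any gap theorem could produce --- sits at $\Re s=-\tfrac12$, not at $\Re s=\tfrac12$. ``Building the exceptional realization into the coefficients from the outset'' fares no better, because the lemma demands a single $F$ satisfying (1) and (2) simultaneously, so redesigning the coefficients for (2) obliges you to recheck (1) and membership in $\Ht$. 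As written, conclusion (2) is unproven and the proposed repairs are dead ends; the fix is either the paper's (cite \cite{gh} for both properties of this specific $F$) or an honest reproduction of the construction there, which produces a genuinely exceptional, measure-zero character by hand rather than by any gap or almost-sure mechanism.
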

\begin{proof}It was shown in \cite{gh} that the function 
\[ F(s) =  \sum_{ p\ \text{prime} } \frac{1}{ \sqrt{ p } \log p } p^{-s}  \]
satisfies conditions $(1)$ and $(2)$. Clearly, $F$ is in $\Hw$ because $w_n\geqslant 1$.
\end{proof}
The rest of the proof consists of two steps. We shall first embed $\Hw$ into certain Bergman spaces, and then apply Littlewood's subordination principle to functions in these Bergman spaces.

\begin{lem}[Embedding of $\Hw$]\label{contiembed}
Let the weight $\{w_n\}$ of $\Hw$ have average order $( \log_j n )^\alpha$. Then $\mathscr{H}_{w}  $ is continuously embedded into $D_{\alpha,j,i}(\C_{1/2})$.
\end{lem}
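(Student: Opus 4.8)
The plan is to pass to the disk model and reduce the statement to a coefficient estimate in which the average order of $\{w_n\}$ is the only arithmetic input. By the conformal change of variables $\tau$ we have $\|F\|_{D_{\alpha,j,i}(\C_{1/2})}^2=\|F\circ\tau^{-1}\|_{D_{\alpha,j}(\D)}^2$, and a direct computation of a single monomial shows $\|n^{-s}\|_{D_{\alpha,j,i}(\C_{1/2})}^2\asymp (n(1+\log_j^+n)^\alpha)^{-1}$: indeed $n^{-\tau^{-1}(z)}=\sqrt n\,\exp(-2\log n/(1-z))$ has its Taylor mass concentrated at frequency $k\asymp\log n$, and the disk weight assigns such a coefficient the factor $(1+\log_{j-1}^+k)^{-\alpha}\asymp(\log_j n)^{-\alpha}$. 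Since $w_n\le\sum_{k\le n}w_k\asymp n(\log_j n)^\alpha$ by the average-order hypothesis, the diagonal already satisfies $\|n^{-s}\|_{D_{\alpha,j,i}(\C_{1/2})}^2\lesssim 1/w_n$ with room to spare; the whole difficulty therefore lies in the off-diagonal interaction of the monomials, which is genuinely present (for $n\asymp m$ the Bergman inner product is comparable to the diagonal) and must be organized correctly.

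First I would record the size of the Gram entries $K_{nm}:=\langle n^{-s},m^{-s}\rangle_{D_{\alpha,j,i}(\C_{1/2})}$. Writing the integral over $\C_{1/2}$ as an iterated integral and performing the $t$-integration produces a radial profile times the Fourier transform of the $t$-marginal of $d\mu_j$ evaluated at $\log(m/n)$; for $j=1$ this transform is a Bessel factor decaying like $e^{-|\log(m/n)|}$, and for $j>1$ the analogous computation should again yield decay in $|\log(m/n)|$. Consequently $|K_{nm}|$ is small once $n$ and $m$ lie in far apart dyadic blocks $B_\nu:=\{2^\nu\le n<2^{\nu+1}\}$, because such $n\in B_\nu$, $m\in B_{\nu'}$ are multiplicatively separated, $|\log(m/n)|\gtrsim|\nu-\nu'|$. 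Splitting $F=\sum_\nu F_\nu$ with $F_\nu=\sum_{n\in B_\nu}a_n n^{-s}$, the geometric decay of the cross-block contributions gives, by a Schur estimate on the block index, the almost-orthogonality $\|F\|_{D_{\alpha,j,i}(\C_{1/2})}^2\lesssim\sum_\nu\|F_\nu\|_{D_{\alpha,j,i}(\C_{1/2})}^2$.

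It then remains to prove the within-block bound $\|F_\nu\|_{D_{\alpha,j,i}(\C_{1/2})}^2\lesssim\sum_{n\in B_\nu}|a_n|^2/w_n$, and this is where the average order enters. On a single block every pair has $\log(m/n)$ bounded, so the profile is nearly constant and the normalized Gram matrix $(\sqrt{w_nw_m}\,K_{nm})_{n,m\in B_\nu}$ is a small perturbation of a rank-one matrix; its principal eigenvalue is $\asymp (N_\nu(\log_j N_\nu)^\alpha)^{-1}\sum_{n\in B_\nu}w_n$ with $N_\nu=2^\nu$, which is $O(1)$ precisely because $\sum_{n\in B_\nu}w_n\le\sum_{k<2^{\nu+1}}w_k\asymp N_\nu(\log_j N_\nu)^\alpha$. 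The perturbation, namely the deviation of the profile from a constant across the block, is controlled because the block has bounded multiplicative width: after centering, the relevant frequencies $\log(n/N_\nu)$ stay in $[0,\log 2]$, so expanding the profile to finite order and estimating each rank-bounded correction by Cauchy--Schwarz against $\sum_{n\in B_\nu}w_n$ keeps every contribution $O(1)$. Hence the block operator is bounded on $\ell^2(B_\nu)$ relative to the weighted norm, which is exactly the desired inequality; summing over $\nu$ finishes the proof. The main obstacle is the first step for $j>1$: establishing the diagonal asymptotic and, above all, the decay in $|\log(m/n)|$ of the Gram entries for the iterated-logarithmic measure $d\mu_j$, where the clean Bessel computation available in the Bailleul--Brevig case $j=1$ must be replaced by a careful analysis of the $t$-marginal of $d\mu_j$.
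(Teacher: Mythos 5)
Your plan is a genuinely different route from the paper's (the paper follows Olsen's local-embedding method: reduce to $\int_{1/2}^1\int_0^1|F|^2\,dt\,d\mu_j^*(\sigma)$, use $L^2(0,1)$-duality and Cauchy--Schwarz so that no cross terms $n\neq m$ ever appear, and then invoke the one-dimensional asymptotic $\int_0^1 n^{-t}\bigl(\prod_\ell\log_\ell^+\tfrac{e_\ell}{t}\bigr)^{-1}\bigl(\log_{j-1}^+\tfrac{e_{j-1}}{t}\bigr)^{-(\alpha+1)}\tfrac{dt}{t}\asymp(\log_j^+n)^{-\alpha}$ of Lemma \ref{weightom}). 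But as it stands your argument has a genuine gap, and it sits exactly where you place "the main obstacle": the off-diagonal decay of the Gram entries $K_{nm}$ for the iterated-logarithmic measures $d\mu_j$, $j>1$, is asserted ("should again yield decay"), not proved. This estimate cannot be waved through, because the trivial bound
\[
|K_{nm}|\;\leqslant\;\int_{\C_{1/2}}(nm)^{-\sigma}\,d\mu_j(s)\;\asymp\;(nm)^{-1/2}\bigl(\log_j^+(nm)\bigr)^{-\alpha}
\]
(which is all one gets without exploiting the oscillation $e^{-it\log(n/m)}$) produces cross-block contributions with \emph{no} decay in $|\nu-\nu'|$: writing $\xi=\log(m/n)$, one has $(nm)^{-1/2}=\max(n,m)^{-1}e^{|\xi|/2}$, and after Cauchy--Schwarz against $\sum_{n\in B_\nu}w_n\asymp N_\nu(\log_j N_\nu)^\alpha$ each cross term is merely $O\bigl(\|F_\nu\|_{\Hw}\|F_{\nu'}\|_{\Hw}\bigr)$, so the sum over block pairs diverges. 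Thus your proof is complete only modulo a two-dimensional Fourier estimate (decay of the $t$-marginal transform of $d\mu_j$, uniformly in $\sigma$, with the $\log^+$-truncations creating kinks in the density) that is at least as hard as, and strictly more than, the paper's Lemma \ref{weightom} -- which your route needs anyway for the diagonal entries $K_{nn}$.

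There is also a secondary structural flaw: the intermediate almost-orthogonality $\|F\|_{D_{\alpha,j,i}(\C_{1/2})}^2\lesssim\sum_\nu\|F_\nu\|_{D_{\alpha,j,i}(\C_{1/2})}^2$ cannot be obtained by a Schur estimate from entrywise bounds on $K_{nm}$, because that would require $|\langle F_\nu,F_{\nu'}\rangle|\leqslant c_{|\nu-\nu'|}\|F_\nu\|_{D}\|F_{\nu'}\|_{D}$ with summable $c_k$, and -- as you yourself observe -- the monomials inside a block are nearly parallel, so $\|F_\nu\|_{D}$ can be far smaller than any coefficient expression; there is no lower Riesz bound to convert entry estimates into angle estimates. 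The fix is to skip the $D$-norms of blocks altogether and bound every cross term directly against the weighted quantities, $|\langle F_\nu,F_{\nu'}\rangle|\lesssim c_{|\nu-\nu'|}\|F_\nu\|_{\Hw}\|F_{\nu'}\|_{\Hw}$, exactly by your within-block Cauchy--Schwarz device (entrywise domination by a rank-one matrix built from $\sqrt{w_n}$, using the average order); that part of your argument is correct and is indeed where the hypothesis on $\{w_n\}$ enters, playing the same role as it does in bounding the dual sum $\sum_n w_n|\hat g(\log n)|^2/(n(\log_j^+n)^\alpha)$ in the paper. In short: your skeleton is viable and would give an interesting alternative, reproducing-kernel-flavoured proof, but the linchpin estimate for $j>1$ is missing, whereas the paper's duality trick eliminates the off-diagonal terms from the outset and reduces everything to a one-variable real integral.
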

For every $\tau\in\Z$, we define $Q_\tau=(1/2, 1]\times [\tau, \tau+1)$. It suffices to prove the following local embedding for $\Hw$,
\[\sup_{\tau\in\R} \int_{Q_\tau} |F(\sigma+it)|^2 dtd\mu_{j}^*(\sigma) \ll \|F\|_{\Hw}^2. \]
The case when $j=1$ was established in \cite{O}. We shall use the same method to establish the general case.

It will suffice to prove the inequality
\begin{equation}
\label{local}
\int_{1/2}^1 \int_0^1 \left| F(\sigma + it)\right|^2  dtd\mu_{j}^*(\sigma) \ll \|F\|_{\Hw}^2,
\end{equation}
where
\begin{align*}
d\mu_j^*(\sigma) : = &  \frac{ \alpha }{ (\sigma-1/2) }  \left( \prod_{\ell=1}^{j-2}\log_\ell^+  \frac{ e_\ell }{ (\sigma-1/2) }  \right)^{-1} \left( \log_{j-1}^+  \frac{ e_{j-1} }{ (\sigma-1/2) }  \right)^{-(\alpha+1)} d\sigma,\quad j> 1.\\
\end{align*}
We need the following lemma. 
\begin{lem}\label{weightom}
For $\alpha>0$ and $j\geqslant 2$, letting $n\to\infty$, we have
 \[ \int_0^1 n^{-t} \left( \prod_{l=1}^{j-2}\log_{l}^+ \frac{e_\ell}{t} \right)^{-1}  \left( \log_{ j-1 }^+ \frac{e_{j-1} }{t} \right)^{-(\alpha+1)} \frac{dt }{t} = \frac{1}{\alpha}  \left( \log_{j}^+ n  \right)^{-\alpha } + \mathscr{O}\left( \left( \log_{j}^+ n \right)^{-\alpha-1 } \right). \]
\end{lem}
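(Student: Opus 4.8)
The plan is to prove the asymptotic by isolating where the integrand concentrates as $n\to\infty$ and then performing a Laplace-type analysis. The factor $n^{-t}=e^{-t\log n}$ decays extremely rapidly once $t$ exceeds $1/\log n$, so the whole integral is governed by the behavior near $t=0$, where the logarithmic product and the $(\log_{j-1}^+(e_{j-1}/t))^{-(\alpha+1)}$ factor are slowly varying. First I would substitute $u=t\log n$, turning the integral into
\[
\frac{1}{\log n}\int_0^{\log n} e^{-u}\Big(\prod_{\ell=1}^{j-2}\log_\ell^+\tfrac{e_\ell\log n}{u}\Big)^{-1}\Big(\log_{j-1}^+\tfrac{e_{j-1}\log n}{u}\Big)^{-(\alpha+1)}\,\frac{du}{u/\log n},
\]
where the two $1/\log n$ factors cancel, leaving a weight depending on $\log n/u$. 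The key observation is that for fixed $u$ and large $n$ the iterated logarithms behave like $\log_\ell^+(e_\ell\log n/u)\sim\log_{\ell+1}^+ n$, since adding or dividing by $u$ perturbs the innermost logarithm negligibly.

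The main computation is to make this heuristic rigorous and to extract the first-order correction term. I would use the telescoping identity
\[
\frac{d}{dt}\log_j^+\tfrac{e_{j-1}}{t}=-\frac{1}{t}\Big(\prod_{\ell=1}^{j-2}\log_\ell^+\tfrac{e_\ell}{t}\Big)^{-1}\Big(\log_{j-1}^+\tfrac{e_{j-1}}{t}\Big)^{-1},
\]
which shows that the measure $d\mu_j^*$ is essentially the differential of an iterated logarithm; this is precisely why the leading constant comes out as $1/\alpha$. Writing the integrand as $(\log_j^+(e_{j-1}/t))^{-\alpha}$ times $\frac{d}{dt}\log_j^+(e_{j-1}/t)$, the antiderivative is $-\tfrac{1}{\alpha}(\log_j^+(e_{j-1}/t))^{-\alpha}$, and I would integrate by parts against $n^{-t}$. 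The boundary and the remaining integral then both reduce to evaluating $(\log_j^+(e_{j-1}/t))^{-\alpha}$ at the effective scale $t\asymp 1/\log n$, giving the claimed main term $\tfrac{1}{\alpha}(\log_j^+ n)^{-\alpha}$.

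The hard part will be controlling the error term to the precise order $(\log_j^+ n)^{-\alpha-1}$ rather than a cruder $o((\log_j^+ n)^{-\alpha})$. This requires quantifying the discrepancy between $\log_j^+(e_{j-1}/t)$ evaluated at $t=u/\log n$ and its limiting value $\log_{j+1}^+ n$, uniformly over the range of $u$ where $e^{-u}$ is non-negligible. I would expand $\log_j^+\tfrac{e_{j-1}\log n}{u}=\log_{j+1}^+ n+\frac{\log(1/u)+O(1)}{\prod_{\ell=1}^{j-1}\log_\ell n}$, observe that the correction is uniformly $O(1/\log n)$ relative to the main scale, and then show this feeds through the $(\cdot)^{-\alpha}$ nonlinearity to produce exactly one extra factor of $\log_j^+ n$ in the denominator. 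Splitting the range at, say, $u=\log\log n$ to separate the genuine Laplace contribution from the exponentially small tail $\int_{\log\log n}^{\log n}e^{-u}(\cdots)\,du$ will keep the error estimates clean, since the tail is $O(n^{-c})$ and absorbed trivially.
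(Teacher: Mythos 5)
Your overall strategy is in essence the paper's own: the paper likewise localizes at the scale $t\asymp 1/\log n$ (it splits the integral at $1/\log n$ and at $1/\sqrt{\log n}$, writes $e^{-t\log n}=1+\mathscr{O}(t\log n)$ on the inner piece, and kills the outer pieces by the exponential decay), and its main term comes from exactly the antiderivative structure of the weight that drives your integration by parts. So the substitution $u=t\log n$ plus integration by parts is a repackaging of the same two ingredients rather than a genuinely different route.

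Two things in your write-up are, however, wrong as stated and need repair. First, your ``telescoping identity'' is exact only when $j=2$: differentiating honestly,
\[
\frac{d}{dt}\log_j\frac{e_{j-1}}{t}=-\frac{1}{t}\prod_{\ell=1}^{j-1}\left(\log_\ell\frac{e_{j-1}}{t}\right)^{-1},
\]
so every factor carries the constant $e_{j-1}$, whereas the weight in the lemma carries $e_\ell$ in the $\ell$-th factor. For $j\geqslant 3$ the two densities agree only up to a factor $1+\mathscr{O}(1/\log(1/t))$, which is harmless on the relevant range (it is $1+\mathscr{O}(1/\log_2 n)$ there, and $1/\log_2 n\leqslant 1/\log_j n$, so it is absorbed by the error term), but it must be estimated, not asserted as an identity. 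Second, there is an off-by-one index error running through your main computation: up to the correction just mentioned, the integrand is $(\log_{j-1}(e_{j-1}/t))^{-\alpha}$ times $-\frac{d}{dt}\log_j(e_{j-1}/t)$, i.e.\ of the form $-h^{-\alpha-1}h'$ with $h(t)=\log_{j-1}(e_{j-1}/t)$, so the antiderivative is $\frac{1}{\alpha}\left(\log_{j-1}(e_{j-1}/t)\right)^{-\alpha}$ --- with $j-1$ iterated logarithms, not $j$. As you wrote it ($g^{-\alpha}g'$ with $g=\log_j$), the antiderivative would be $g^{1-\alpha}/(1-\alpha)$, inconsistent with the $-\frac{1}{\alpha}g^{-\alpha}$ you state; and evaluating your stated antiderivative at $t\asymp 1/\log n$ would produce $(\log_{j+1}n)^{-\alpha}$, one logarithm too many. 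With the index corrected, evaluation at $t\asymp 1/\log n$ gives $\frac{1}{\alpha}\left(\log_{j-1}(e_{j-1}\log n)\right)^{-\alpha}=\frac{1}{\alpha}\left(\log_j n\right)^{-\alpha}\left(1+\mathscr{O}(1/\log_j n)\right)$, which is the lemma. (A small further point: splitting at $u=\log\log n$ makes the tail $\mathscr{O}(1/\log n)$, not $\mathscr{O}(n^{-c})$; this is still negligible for the stated error term.)
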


\begin{proof}We first prove the case $j=2$ which is given by the integral
\begin{equation*}
I := \int_0^1 e^{-t\log n}\left( \log \frac{e}{t} \right)^{-(\alpha+1)} \frac{dt}{t}. \\
\end{equation*}
We split the integral at the point $t = 1/\log n$, which is dictated by the exponential decay of the integrand. This gives 
\[I = I_1 + I_2, \]
where 
\[I_1 = \int_0^{ 1/\log n } e^{-t\log n}\left( \log \frac{e}{t} \right)^{-(\alpha+1)} \frac{dt}{t}   \]
and
\[I_2 = \int_{ 1/\log n }^{ 1} e^{-t\log n}\left( \log \frac{e}{t} \right)^{-(\alpha+1)} \frac{dt}{t} .  \]
For $I_2$, we split it again at the point $t=\frac{1}{ \sqrt{ \log n } }$: 
\begin{align*}
 \int_{ 1/\log n }^{ 1/\sqrt{\log n}} e^{-t\log n}\left( \log \frac{e}{t} \right)^{-(\alpha+1)} \frac{dt}{t} \ll & \frac{1}{ \left( \log e\sqrt{\log n} \right)^{1+\alpha}  }\int_{ 1/\log n }^{ 1/\sqrt{\log n}} e^{-t\log n} \frac{dt}{t}\\
                                                                                                                                                    \ll & \frac{1}{ \left( \log_2 n \right)^{ \alpha +1 }  } \int_1^\infty e^{ -t } \frac{ dt }{ t } \\
                                                                                                                                                    \ll &  \frac{1}{ \left( \log_2 n \right)^{ \alpha +1 }  };
\end{align*}
\begin{align*}
 \int_{ 1/\sqrt{\log n}}^1 e^{-t\log n}\left( \log \frac{e}{t} \right)^{-(\alpha+1)} \frac{dt}{t} \ll & \sqrt{ \log n } \int_{ 1/\sqrt{\log n}}^1  e^{-t\log n} dt\\
                                                                                                                                      \\ \ll&\frac{e^{-\sqrt{\log n}  }}{ \sqrt{ \log n }} .
\end{align*}
For $I_1$, we write $e^{-t\log n}=1 + \mathscr{O}\left( t\log n  \right)$
\begin{align*} I_1 = & \int_0^{ 1/\log n } \left( 1  + \mathscr{O}\left( t\log n  \right) \right)   \left( \log \frac{e}{t} \right)^{-(\alpha+1)} \frac{dt}{t}   \\
                            = &  \int_{\log n}^\infty  \left( \log (et) \right)^{-(\alpha+1)} \frac{dt}{t}  + \mathscr{ O }\left( \log n \int_0^{ 1/\log n }  \left( \log \frac{e}{t} \right)^{-(\alpha+1)} dt \right) \\
                            = & \frac{1}{\alpha}  \left( \log_2 n  \right)^{-\alpha } + \mathscr{O}\left( \left( \log_2 n \right)^{-\alpha-1 } \right).
\end{align*}

For the general integral with $j> 2$ we can follow the same procedure. The main contribution comes from the term $I_1$, and $I_2$ gives a negligible contribution, that is
\begin{align*} 
I_1 = & \frac{1}{\alpha}  \left( \log_{j}^+ n  \right)^{-\alpha } + \mathscr{O}\left( \left( \log_{j}^+ n \right)^{-\alpha-1 } \right),\\
I_2\ll  & \left( \log_{j}^+ n \right)^{ -\alpha -1 }  + \frac{e^{-\sqrt{\log n}  }}{ \sqrt{ \log n }}.\\
\end{align*}

\end{proof}

\begin{proof}[Proof of Lemma \ref{contiembed}]Let $ F\in \Hw$. Using duality, we have

\begin{align*}
\int_0^1 \left| F(\sigma + it)\right|^2  dt = & \sup_{ g\in L^2(0,1) \atop \left\|g\right\|_2=1}\left| \int_0^1 \sum_{n\geqslant 1} a_n n^{-\sigma - it} g(t)dt\right| \\
                                             \leqslant &\underbrace{ \left( \sum_{n=1}^\infty \frac{ \left|  a_n\right|^2 }{ w_n } n^{-2\sigma +1} (\log_j^+ n)^\alpha \right) }_{ \text{(i)}} \underbrace{\left( \sup_{ g\in L^2(0,1) \atop \left\|g\right\|_2=1}\sum_{n=1}^\infty \frac{\left| \hat{g}(\log n)\right|^2 w_n }{n (\log_j^+ n)^\alpha }  \right)}_{ \text{(ii)} },\\                                             
\end{align*}
where $\hat{g}$ is the Fourier transform of $g$. By the smoothness of $\hat{g}$ and the assumption on $w_n$, the supremum on the right hand side is finite. Integrating both sides against $d\mu_{j}^*(\sigma)$ and applying Lemma \ref{weightom}, we get the inequality (\ref{local}).\\

\end{proof}

\begin{lem}\label{ subordv }
For $\omega: \D\to\D$ and $f\in D_{\alpha,j}(\D)$, there exists some constant $C_0$ depending on $\omega(0)$ such that
\[ \| f\circ \omega \|_{ D_{\alpha,j}(\D)  }^2 \leqslant C_0 \| f \|_{ D_{\alpha,j}(\D)  }^2 ,  \]
i.e.,
\begin{equation}\label{subord}
 \int_\D \left| f \left( \omega \left( z \right)   \right) \right|^2 dv_{\alpha,j}(z) \leqslant C_0\int_\D \left| f \left( z \right) \right|^2 dv_{\alpha,j}(z).
\end{equation}
\end{lem}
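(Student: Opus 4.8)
The plan is to follow the classical route for Littlewood's subordination principle: factor $\omega$ through a disk automorphism and treat separately the centered self-map and the automorphism. Write $a=\omega(0)$ and let $\varphi_a(z)=\frac{a-z}{1-\bar a z}$ be the involutive automorphism of $\D$ interchanging $0$ and $a$. Setting $\psi:=\varphi_a\circ\omega$ we have $\psi(0)=0$, and since $\varphi_a\circ\varphi_a=\mathrm{id}$ we obtain the factorization $f\circ\omega=(f\circ\varphi_a)\circ\psi$. Thus it suffices to establish two bounds: a contraction estimate $\|g\circ\psi\|_{D_{\alpha,j}(\D)}\le\|g\|_{D_{\alpha,j}(\D)}$ for every holomorphic self-map $\psi$ fixing the origin, and a boundedness estimate $\|f\circ\varphi_a\|_{D_{\alpha,j}(\D)}^2\le C_0\|f\|_{D_{\alpha,j}(\D)}^2$ for the single automorphism $\varphi_a$, with $C_0$ depending only on $a=\omega(0)$. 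Applying the first bound to $g=f\circ\varphi_a$ and combining it with the second yields the lemma.

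For the centered map $\psi$ I would use Littlewood's subordination in its pointwise form. Since $|g|^2$ is subharmonic and $|\psi(z)|\le|z|$ by the Schwarz lemma, comparison with the Poisson extension of $|g|^2$ over each disk $\{|z|\le r\}$ gives, for the circular means $M_g(r):=\frac{1}{2\pi}\int_0^{2\pi}|g(re^{i\theta})|^2\,d\theta$, the inequality $M_{g\circ\psi}(r)\le M_g(r)$ for every $r\in[0,1)$. Writing $dv_{\alpha,j}(z)=\tilde w(|z|)\,dA(z)$ with $\tilde w\ge 0$ and passing to polar coordinates,
\[ \|g\circ\psi\|_{D_{\alpha,j}(\D)}^2=\int_0^1 2\pi M_{g\circ\psi}(r)\,\tilde w(r)\,r\,dr\le\int_0^1 2\pi M_g(r)\,\tilde w(r)\,r\,dr=\|g\|_{D_{\alpha,j}(\D)}^2. \]
It is worth stressing that $M_{g\circ\psi}\le M_g$ holds pointwise in $r$ and is integrated against the positive radial measure $2\pi\tilde w(r)\,r\,dr$, so no monotonicity of the weight is needed here and the constant is exactly $1$.

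The remaining and genuinely technical step is the boundedness of composition with the fixed automorphism $\varphi_a$. Performing the change of variables $w=\varphi_a(z)$ and using the standard identities $|\varphi_a'(w)|^2=\frac{(1-|a|^2)^2}{|1-\bar a w|^4}=c(w)^2$ and $1-|z|^2=c(w)\,(1-|w|^2)$ with $c(w):=\frac{1-|a|^2}{|1-\bar a w|^2}$, the estimate $\|f\circ\varphi_a\|^2\le C_0\|f\|^2$ reduces to the pointwise weight comparison
\[ W_j\!\left(c(w)\,(1-|w|^2)\right)\,|\varphi_a'(w)|^2\le C_0\,W_j(1-|w|^2), \]
where $W_j$ denotes the radial density of $dv_{\alpha,j}$ as a function of $1-|z|^2$. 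Since $|1-\bar a w|\in[1-|a|,1+|a|]$, the factor $c(w)$ lies in a fixed compact subinterval of $(0,\infty)$ depending only on $a$, and the Jacobian $c(w)^2$ is likewise bounded in terms of $a$. Hence everything comes down to a scaling (doubling) property of the weight: $W_j(cx)\asymp W_j(x)$ uniformly for $c$ in a fixed compact subset of $(0,\infty)$ and $x\in(0,1]$. I expect this to be the main obstacle. For $x$ bounded away from $0$ it is immediate from continuity and positivity, so the issue is the boundary regime $x\to 0^+$, i.e. $|w|\to 1$. There the $1/x$ prefactor contributes only the harmless factor $1/c$, while each iterated logarithm is slowly varying: $\log(e/(cx))=\log(e/x)-\log c$, and taking further logarithms absorbs the additive constant $-\log c$, so that $\log_\ell(e_\ell/(cx))/\log_\ell(e_\ell/x)\to 1$ as $x\to 0^+$ for every $\ell$. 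Consequently each factor of $W_j$ changes by only a bounded multiplicative amount, which gives the required comparison with a constant $C_0=C_0(\omega(0))$ and completes the proof. For $j=1$ the weight is simply $W_1(x)=\alpha x^{\alpha-1}$ and the comparison is the trivial $c^{\alpha-1}\asymp 1$, recovering the classical subordination estimate for the weighted Bergman space $\At_\alpha$.
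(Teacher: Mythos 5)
Your proof is correct and takes essentially the same route as the paper: factor $\omega$ through a M\"obius automorphism sending $0$ to $a=\omega(0)$, apply Littlewood's subordination principle to the centered self-map via circular means against the radial measure, and handle the automorphism by a change of variables plus a pointwise weight comparison yielding a constant depending only on $a$. If anything, your writeup is more careful than the paper's: you use the genuinely involutive automorphism $\varphi_a$ (the paper's $\omega_a(z)=\frac{z-a}{1-\bar a z}$ is not an involution, so its claim $\omega=\omega_a\circ\psi$ has a harmless slip), and you spell out the slowly-varying comparison $W_j(cx)\asymp W_j(x)$ for the iterated-logarithm weight, which the paper compresses into an unexplained constant $C_a$.
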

\begin{proof} 
Suppose $\omega(0) = a$, and let $\psi:\D\to\D$ be analytic such that $\psi(z)=\omega_a \circ \omega(z)$ with $\omega_a (z) = \frac{z-a}{1-\bar{a}z }$ . Then we have $\omega(z)=\omega_a \circ \psi(z)$. Starting with Littlewood's subordination principle,
\begin{align*}
\int_0^{2\pi} \left| f \left(  \omega \left( re^{i\theta}\right)   \right) \right|^2 \frac{d\theta}{2\pi} =  & \int_0^{2\pi} \left| f \left(  \omega_a \circ \psi \left( re^{i\theta} \right)   \right) \right|^2 \frac{d\theta}{2\pi} \\
                                                                                                       \leqslant & \int_0^{2\pi} \left| f \left(  \omega_a  \left( re^{i\theta} \right)   \right) \right|^2 \frac{d\theta}{2\pi}
\end{align*}
since $\psi(0)=0$.
Therefore,
\begin{align*}
& \int_\D \left| f \left( \omega ( z )   \right) \right|^2 dv_{\alpha,j}(z)  \\
&\quad\leqslant  \left( 1-|a|^2 \right) \int_\D \left| f \left( z \right) \right|^2  \left(\prod_{\ell=1}^{j-1} \log_\ell^+ \frac{e_\ell}{ 1 - | \omega_a(z) |^2 }  \right)^{-1} \left( \log_j^+ \frac{e_j  }{ 1 - | \omega_a(z) |^2 }  \right)^{-(\alpha+1)}  \frac{ dA(z) }{ \left(1-|z|^2\right)\left| 1-\bar{a}z \right|^2 }   \\
\\
&\quad\leqslant C_a \left( \frac{  1+|a| }{ 1-|a| } \right) \int_\D \left| f \left( z \right) \right|^2 dv_{\alpha,j}(z)                                                                                              
\end{align*}
\\
for some $C_a$ depending on $a$.\\
\end{proof}

\begin{proof}[Proof of Theorem \ref{main}]
When $c_0=0$, by Lemma \ref{fphichi}, $\left( F\circ\Phi \right)_\chi (s)  =  \left( F \circ\Phi_\chi \right) (s)$ for every Dirichlet polynomial $F$, $\chi\in\overline{\D^\infty}$ and $s\in\C_0$.  For fixed $s$, $\lambda\in \D$, $\chi\in\T^\infty $ and $\lambda\chi := ( \lambda\chi_1,  \lambda\chi_2,  \lambda\chi_3, \dots )$, we may view
\[ \Phi_{\lambda\chi}(s) = \sum_{n=1}^\infty c_n \lambda^{\Omega(n)} \chi(n)n^{-s} = \phi_{\lambda\chi}(s) :=\eta_s(\lambda) \]
as an analytic map $\eta_s: \D\to\C_{1/2}$ with $\eta_s(0)=c_1$. Putting $\omega = \tau\circ\eta_s = \tau\circ \Phi_{\lambda\chi}$ and applying $\tau$ to the inequality \eqref{subord} with $f=F\circ\tau^{-1}$ and being a Dirichlet polynomial, we have
\begin{equation}\label{fcomphi}
 \| F\circ\Phi_{\lambda\chi} \|_{D_{\alpha,j}(\D)}^2  \leqslant C_{1} \frac{ 1+\tau(c_1)  }{ 1-\tau(c_1) } \| F \|_{  D_{\alpha,j,i}(\C_{1/2}) }^2.  
\end{equation}
As in \cite{bb}, we assume $F\circ \Phi(s) = \sum_{n=2}^\infty b_nn^{-s}$. To avoid negative arguments in the $j$-fold logarithm, we shall equip $\chi\in\T^\infty$ with an indicator function with respect to the value of $\Omega(n)$ by defining
\[\chi_j(n) = \chi(n)\cdot \mathds{1}_{\big\{n:\ \Omega(n) > e_{j-3} \big\}} (n). \]
Then we put $F\circ\Phi_{\lambda\chi_j} = \sum_{n=2}^\infty b_n \lambda^{\Omega(n)} \chi_j(n)n^{-s}$ into \eqref{fcomphi} and integrate against the Haar measure $d\rho(\chi)$ over $\T^\infty$ to get
\begin{equation}
\label{cnull} 
\int_{\T^\infty}\int_\D \left| \sum_{n=1}^\infty b_n \lambda^{\Omega(n)} \chi_j(n)n^{-s}   \right|^2 dv_{\alpha,j}(\lambda)d\rho(\chi) \asymp  \sum_{\big\{n:\ \Omega(n) > e_{j-3} \big\}} \frac{ |b_n|^2 n^{-2\sigma} }{ \left( 1+\log_{j-1}\Omega(n) \right)^\alpha }.
\end{equation}
Upon letting $\sigma\to 0$ we have
\[  \sum_{\big\{n:\ \Omega(n) > e_{j-3} \big\}} \frac{ |b_n|^2 }{ \left( 1+\log_{j-1}\Omega(n) \right)^\alpha } \leqslant C_{c_1} \frac{ 1+\tau(c_1)  }{ 1-\tau(c_1) } \| F \|_{  D_{\alpha,j,i}(\C_{1/2}) }^2  \]
for some constant depending on $c_1$.
We get our conclusion by Lemma \ref{contiembed}.\\
\end{proof}

Even though we may get  $C_0=1$ in Lemma \ref{ subordv } by requiring $\omega(0) = 0 = \tau\circ\eta_s(0) = \tau \left(  c_1 \right) $, we cannot claim the contractivity due to the constant appearing in the embedding.

\section{The average order} \label{averageorder}
In this section, we will verify that the average order of  $(\log_j^+\Omega(n))^\alpha$ is $(\log_{j+2}^+ n)^\alpha$. It will suffice to give the details when $j=0$.
\begin{prop}\label{higherorder}
For  real $\alpha\geqslant1$, we have
\begin{equation}\label{oa}
\sum_{n\leqslant X} \Omega(n)^\alpha =  X \left( \log_2 X \right)^\alpha + \mathscr{O}\left[ X \left( \log_2 X  \right)^{\alpha-1}  \right]
\end{equation}
\end{prop}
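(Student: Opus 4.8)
The plan is to exploit that $\Omega$ is additive and sharply concentrated around its mean $L:=\log_2 X=\log\log X$, so that $\Omega(n)^\alpha\approx L^\alpha$ for almost every $n\le X$ and the asymptotic is driven by the first two moments of $\Omega$. First I would record two classical inputs. Counting divisibilities gives $\sum_{n\le X}\Omega(n)=\sum_{p^k\le X}\lfloor X/p^k\rfloor=X\sum_{p^k\le X}p^{-k}+\mathscr{O}(X/\log X)$, and with Mertens' estimate $\sum_{p\le X}p^{-1}=L+\mathscr{O}(1)$ this yields $\sum_{n\le X}(\Omega(n)-L)=\mathscr{O}(X)$. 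The second input is the concentration bound $\sum_{n\le X}(\Omega(n)-L)^2\ll XL$, which is the Tur\'an--Kubilius inequality applied to $\Omega$ (whose relevant variance sum $\sum_{p^k\le X}k^2p^{-k}$ is again $L+\mathscr{O}(1)$).

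Next I would fix $\epsilon\in(0,1)$ and split the sum into the central range $C=\{n\le X:\ |\Omega(n)-L|\le\epsilon L\}$ and its complement $T$. On $C$ we have $\Omega(n)\ge(1-\epsilon)L\gg 1$ for $X$ large, so Taylor expanding $x\mapsto x^\alpha$ about $L$ is legitimate: $\Omega(n)^\alpha=L^\alpha+\alpha L^{\alpha-1}(\Omega(n)-L)+\mathscr{O}_\alpha\!\left(L^{\alpha-2}(\Omega(n)-L)^2\right)$, the remainder being uniform because the Lagrange point lies between two quantities of size $\asymp L$, irrespective of the sign of $\alpha-2$. Summing over $C$, the remainder contributes $\ll L^{\alpha-2}\sum_{n\le X}(\Omega(n)-L)^2\ll XL^{\alpha-1}$; the linear term is $\alpha L^{\alpha-1}\sum_{C}(\Omega(n)-L)=\mathscr{O}(XL^{\alpha-1})$, since Cauchy--Schwarz and the variance bound give $\sum_{C}(\Omega(n)-L)=\sum_{n\le X}(\Omega(n)-L)+\mathscr{O}(X)=\mathscr{O}(X)$; and the constant term is $L^\alpha\#C=XL^\alpha-L^\alpha\#T$. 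Because Chebyshev with the variance bound gives $\#T\ll X/L$, this last term is $XL^\alpha+\mathscr{O}(XL^{\alpha-1})$, which is exactly the claimed main term and error.

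It remains to bound the tail sum $\sum_{T}\Omega(n)^\alpha$ by $\mathscr{O}(XL^{\alpha-1})$. The lower tail $\Omega(n)<(1-\epsilon)L$ is harmless, since there $\Omega(n)^\alpha\le L^\alpha$ and $\#T\ll X/L$. The genuine obstacle is the upper tail, where $\Omega(n)$ ranges up to $\log X/\log 2$ (attained at $n=2^m$) so that $\Omega(n)^\alpha$ vastly exceeds the main term; the point is that such $n$ are exponentially rare. For this I would use the elementary upper bound $\sum_{n\le X}z^{\Omega(n)}\ll_z X(\log X)^{z-1}$, valid for $0<z<2$ (Halberstam--Richert / Shiu, again rooted in $\sum_{p\le X}z/p=zL+\mathscr{O}(1)$). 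Writing the upper tail as $\sum_{k>(1+\epsilon)L}k^\alpha N(X,k)$ with $N(X,k)=\#\{n\le X:\ \Omega(n)=k\}$ and applying $N(X,k)\le z^{-k}\sum_{n\le X}z^{\Omega(n)}$ optimized in $z$ (taking $z=k/L$ while $(1+\epsilon)L<k<2L$, and $z\to 2^-$ for $k\ge 2L$) produces a Poisson-type bound $N(X,k)\ll X\exp\!\big(-k\log(k/L)+k-L\big)$ in the first range and the geometric bound $N(X,k)\ll Xe^{L}2^{-k}$ in the second. In both ranges $\sum k^\alpha N(X,k)\ll Xe^{-c(\epsilon)L}$, which is negligible against $XL^{\alpha-1}$; here it is essential to keep $k^\alpha$ inside the summation, since the factors $e^{-k\log(k/L)}$ and $2^{-k}$ overwhelm its polynomial growth. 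The main difficulty is thus this control of the upper tail near $z=2$, where the Euler factor at the prime $2$ acquires a pole; staying with $z<2$ and exploiting the super-exponential and geometric decay of $N(X,k)$ is what circumvents it.
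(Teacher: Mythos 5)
Your argument is essentially correct, but it takes a genuinely different route from the paper. The paper works throughout with $N_k(X)=\#\{n\leqslant X:\Omega(n)=k\}$ and splits $\sum_k k^\alpha N_k(X)$ into four ranges: the central range $|k-\log_2X|<\epsilon\log_2X$ is treated with the Sathe--Selberg type asymptotic for $N_k(X)$ (Theorem 5 in Chapter II.6 of Tenenbaum) followed by Stirling's formula and Gaussian summation; the two flanking ranges are handled by the Erd\H os--Kac theorem; and the extreme range $k\geqslant A\log_2X$ by Nicolas's bound $N_k(X)\ll (X/2^k)\log(X/2^k)$. You replace all of this input by second-moment machinery: writing $L=\log_2X$ as you do, Mertens gives $\sum_{n\leqslant X}(\Omega(n)-L)=\mathscr{O}(X)$, Tur\'an--Kubilius gives $\sum_{n\leqslant X}(\Omega(n)-L)^2\ll XL$, and then Taylor expansion of $x^\alpha$ about $L$ plus Chebyshev handles the central range and the lower tail, while Rankin's trick applied to $\sum_{n\leqslant X}z^{\Omega(n)}\ll_z X(\log X)^{z-1}$ handles the upper tail. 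Your approach is more elementary and self-contained (no Sathe--Selberg, no Erd\H os--Kac, no Nicolas), and it works verbatim for any real $\alpha>0$; what it gives up is precision: the paper's method, as remarked at the end of its proof, can be pushed to an asymptotic expansion of arbitrary accuracy, whereas a variance-based argument cannot see past the $\mathscr{O}(XL^{\alpha-1})$ error term.

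One step does need repair. For $k\geqslant 2L$ you let $z\to 2^-$ in $N(X,k)\leqslant z^{-k}\sum_{n\leqslant X}z^{\Omega(n)}\ll_z Xz^{-k}(\log X)^{z-1}$ to claim the geometric bound $N(X,k)\ll Xe^{L}2^{-k}$. That limit is not justified: the implied constant contains the Euler factor $(1-z/2)^{-1}$ at the prime $2$ and blows up as $z\to 2^-$ --- precisely the pole you yourself point out. (The bound $N_k(X)\ll X\log X\,2^{-k}$ is in fact true, but it is Nicolas's theorem, not a consequence of the $z<2$ mean-value estimate.) The repair stays entirely inside your framework: fix a small $\delta>0$; use the optimized choice $z=k/L$ only for $(1+\epsilon)L<k\leqslant(2-\delta)L$, where the implied constant is uniform and the exponent $-LQ(k/L)$ with $Q(t)=t\log t-t+1\geqslant Q(1+\epsilon)>0$ makes the total contribution $\ll XL^{\alpha+1}(\log X)^{-Q(1+\epsilon)}$, which is negligible; and for $k>(2-\delta)L$ take the fixed value $z=2-\delta$, giving a contribution $\ll_\delta XL^{\alpha}(\log X)^{1-\delta-(2-\delta)\log(2-\delta)}$, also negligible because $1-2\log 2<0$ forces the exponent to be negative for small $\delta$. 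With this modification your proof is complete.
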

This estimation is consistent with the case when $\alpha=1$ or $2$ which can be found in \cite{T}. 
Let \[ N_k(X) : = \# \big\{ n\leqslant X :  \Omega(n) = k  \big \} \] 
and 
\[ S_\Omega^\alpha(X) = \sum_{n\leqslant X} \Omega(n)^\alpha.   \]
We shall use some results of $N_k(X)$ to estimate $S_\Omega^\alpha( X )$, for which we need to rewrite $S_\Omega^\alpha(X)$ as
\[ S_\Omega^\alpha(X) =  \sum_{  1 \leqslant k\leqslant \frac{ \log X }{ \log 2 }  }k^\alpha N_k( X ).   \]
\begin{proof}[Proof of Proposition \ref{higherorder} ]
The quantity $N_k(X)$ has several changes in its behaviour as $k$ varies with $X$. These are described in \cite{T} (see Theorems 5 and 6 in Chapter II.6) and \cite{N}. Accordingly, we split the sum $S_\Omega^\alpha(X)$ into different parts with respect to $k$. These are given by:
\begin{enumerate}[label=(\roman*)]
\item $1 \leqslant k \leqslant (1-\epsilon)\log_2 X$;
\item $(1-\epsilon)\log_2 X < k < (1+\epsilon)\log_2 X$; 
\item $(1+\epsilon)\log_2 X \leqslant k < A\log_2 X$; 
\item $A\log_2 X \leqslant k \leqslant \frac{ \log X }{ \log 2 }$.
\end{enumerate}
Here  $A\geqslant 3$ and $\epsilon=\sqrt{2B\log_3 X/\log_2 X}$ with $B$ some large constant. With this choice of $\epsilon$ the sum over the range (ii) is centred about the mean of $\Omega(n)$ and hence should give the main contribution. We first concentrate on this range.   

Theorem 5 in Chapter II.6 of \cite{T} states that
\[ N_k(X) =  \frac{ X }{ \log X } \frac{ ( \log_2 X )^{k-1} }{ (k-1)! } \left\{ \nu \left( \frac{ k-1 }{ \log_2 X  } \right) + \mathscr{O} \left( \frac{ k }{ ( \log_2 X )^2 } \right)  \right\}, \]
where \[ \nu(z) := \frac{ 1 }{ \Gamma( z+1 ) } \prod_p \left( 1- \frac{ z }{ p }  \right)^{-1} \left( 1- \frac{1}{p} \right)^z, \quad |z| < 2.\]
Therefore,
\begin{align*}
 \sum_{  k = (1-\epsilon)\log_2 X}^{ (1+\epsilon)\log_2 X } k^\alpha N_k(X) 
 = &\frac{X}{\log X}  \sum_{  k = (1-\epsilon)\log_2 X}^{ (1+\epsilon)\log_2 X } k^\alpha  \frac{ (\log_2 X )^{k-1} }{ (k-1)! }\left( \nu\left( \frac{ k-1 }{ \log_2 X  } \right)  +\mathscr{O}\left( \frac{1}{\log_2 X } \right)   \right) \\
 = & \frac{X}{\log X\log_2 X} \sum_{  k = (1-\epsilon)\log_2 X}^{ (1+\epsilon)\log_2 X } k^{\alpha +1 } \frac{ \left(\log_2 X \right)^{k} }{ k! }\left( \nu\left( \frac{ k-1 }{ \log_2 X  } \right)  +\mathscr{O}\left( \frac{1}{\log_2 X } \right)   \right).
\end{align*}
Applying Stirling's formula gives the sum
\[
\frac{X}{\sqrt{2\pi}\log X\log_2 X}\left(1+\mathscr{O}\left( \frac{1}{\log_2 X } \right) \right) \sum_{  k = (1-\epsilon)\log_2 X}^{ (1+\epsilon)\log_2 X } k^{\alpha +1/2 } \frac{ \left(\log_2 X \right)^{k} e^k}{ k^k }\left( \nu\left( \frac{ k-1 }{ \log_2 X  } \right)  +\mathscr{O}\left( \frac{1}{\log_2 X } \right)   \right).
\]
\\

We now write
$k = \log_2X + {\ell}$ with
\[ \ell\in\left(-\epsilon{\log_2 X}, \epsilon{\log_2 X} \right)=\left(- \sqrt{2B\log_2 X\log_3 X}, \sqrt{2B\log_2 X\log_3 X} \right)\]
so that
\begin{align*}
\nu\left( \frac{ k-1 }{ \log_2 X  } \right)=&\nu(1)+\nu^\prime(1)\frac{\ell-1}{\log_2 X}+\frac{1}{2}\nu^{\prime\prime}(1)\left(\frac{\ell-1}{\log_2 X}\right)^2+O\left(\frac{1}{\log_2 X}\right)\\
=&1+\nu^\prime(1)\frac{\ell}{\log_2 X}+\frac{1}{2}\nu^{\prime\prime}(1)\left(\frac{\ell}{\log_2 X}\right)^2+O\left(\frac{1}{\log_2 X}\right)
\end{align*}
and
\begin{equation}\label{kalpha}
k^{\alpha+1/2}=(\log_2 X)^{\alpha+1/2}\left(1+\frac{\ell}{\log_2 X}\right)^{\alpha+1/2}=(\log_2 X)^{\alpha+1/2}\sum_{m=0}^\infty \binom{\alpha+1/2}{m}\left(\frac{\ell}{\log_2 X}\right)^m.
\end{equation}
Upon forming the product of these two series, our sum becomes
\begin{multline*}
\frac{X(\log_2 X)^{\alpha-1/2}}{\sqrt{2\pi}}\left(1+\mathscr{O}\left( \frac{1}{\log_2 X } \right) \right) \sum_{  \ell = -\epsilon\log_2 X}^{ \epsilon\log_2 X }  \frac{ \left(\log_2 X \right)^{\log_2 X+\ell} e^\ell}{ (\log_2 X+\ell)^{\log_2 X+\ell} }\\
\times\left(1+c_1\frac{\ell}{\log_2 X} +c_2\left(\frac{\ell}{\log_2 X}\right)^2+\mathscr{O}\left( \frac{1}{\log_2 X } \right)   \right).
\end{multline*}
for some coefficients $c_j$.

We  also expand the first factor of our sum as follows
\begin{align*}
\frac{ \left(\log_2 X \right)^{\log_2 X+\ell} e^\ell}{ (\log_2 X+\ell)^{\log_2 X+\ell} }=&e^{\ell-(\log_2 X+\ell)\log(1+\ell/\log_2 X)}\\
=&e^{-\frac{\ell^2}{ 2\log_2 X}} \left[ 1+ \sum_{m=1}^\infty \frac{1}{m!}\left( \sum_{n=3}^\infty \frac{\ell^n}{ n(n-1)\left(\log_2 X\right)^{n-1}}  \right)^m\right].
\end{align*}
Thus, our summand takes the form
\begin{multline}
e^{-\frac{\ell^2}{ 2\log_2 X}} \bigg[1+ c_1\frac{\ell}{\log_2 X} +c_2\left(\frac{\ell}{\log_2 X}\right)^2+\frac{\ell^3}{6(\log_2 X)^2} \\
 +\left( \frac{c_1}{6}\log_2 X + \frac{1}{12}  \right)\left( \frac{\ell}{\log_2 X} \right)^4  +\mathscr{O}\left(\frac{1}{  \log_2 X  }\right) \bigg].  
\end{multline}
Upon performing the sum we only retain the terms with an even power of $\ell$. Hence, we are led to compute  
\begin{multline*}
\frac{X(\log_2 X)^{\alpha-1/2}}{\sqrt{2\pi}}\left(1+\mathscr{O}\left( \frac{1}{\log_2 X } \right) \right) \sum_{  \ell = -\epsilon\log_2 X}^{ \epsilon\log_2 X } e^{-\frac{\ell^2}{ 2\log_2 X}} \bigg[1 +c_2\left(\frac{\ell}{\log_2 X}\right)^2+ \\
 +\left( \frac{c_1}{6}\log_2 X + \frac{1}{12}  \right)\left( \frac{\ell}{\log_2 X} \right)^4  +\mathscr{O}\left(\frac{1}{  \log_2 X  }\right) \bigg].   
\end{multline*}

On approximating the sum with an integral via the Euler--Maclaurin summation formula we gain an error term of order $\mathscr{O}\left(\frac{1}{\left(\log_2 X\right)^B}\right)$.
Then the leading term is given by
\begin{align*}
\int_{-\sqrt{2B\log_2 X\log_3 X}}^{ \sqrt{2B\log_2 X\log_3 X} } e^{ -\frac{t^2}{ 2\log_2 X}  }  dt = & \sqrt{2\pi\log_2 X} \left( 1 - \frac{2}{\sqrt{2\pi}} \int_{\sqrt{B\log_3 X}}^\infty e^{-u^2} du\right) \\
= & \sqrt{2\pi\log_2 X} \left( 1 + \mathscr O \left( \frac{1}{(\log_2 X)^B} \right)\right)
\end{align*}
since
\[ \int_a^\infty e^{-u^2} du \leqslant\frac{1}{a} \int_a^\infty ue^{-u^2} du \ll \frac{1}{a}e^{-a^2}. \]
For the higher powers of $\ell$ we use the formulae
\[ \int_{\infty}^\infty e^{-u^2}u^{2n}du =  \Gamma\left(n+\frac{1}{2}\right) 
\]
and
\[\int_a^\infty e^{-u^2} u^{2n}du \ll a^{2n-1}e^{-a^2} \]
which follow from
\[\int_a^\infty u^{2n}e^{-u^2} du \leqslant \frac{1}{a}\int_a^\infty u^{2n+1}e^{-u^2} du=\frac{1}{2}a^{2n-1}e^{-a^2} +\frac{n}{a}\int_a^\infty u^{2n-1}e^{-u^2}du.\]
These give
\begin{align*}
\int_{-\sqrt{2B\log_3X\log_2 X}}^{ \sqrt{2B\log_3X\log_2 X} } e^{ -\frac{t^2}{ 2\log_2 X}  } t^{2n}  dt = &\left( \sqrt{2\log_2 X}\right)^{2n+1} \int_{-\sqrt{B\log_3X}}^{\sqrt{B\log_3X}} e^{-u^2} u^{2n} du \\
=&\left( \sqrt{2\log_2 X}\right)^{2n+1}\Gamma(n+1/2)+O\left((\log_2 X)^{n+1/2-B}(\log_3 X)^{n-1/2}\right)
\end{align*}

Putting everything together gives
\[ \sum_{  k = (1-\epsilon)\log_2 X}^{ (1+\epsilon)\log_2 X } k^\alpha N_k(X) = X \left( \log_2 X \right)^\alpha +  \mathscr{O}\left( X \left( \log_2 X  \right)^{\alpha-1 } \right). \]
It should be clear from the above that with more precision one can obtain an asymptotic expansion to any required degree of accuracy.

For (i) and (iii), we shall use the Erd\H os--Kac theorem for $\Omega(n)$\cite{anp}, which states that
\begin{equation}\label{ek} 
\#\left\{ n\leqslant X : a \leqslant \frac{ \Omega(n)-\log_2 n }{ \sqrt{ \log_2 n } } \leqslant b \right\} \sim \frac{ X }{ \sqrt{ 2\pi } } \int_a^b e^{ -\frac{ t^2 }{ 2 } } dt.  
\end{equation}
This gives
\begin{align*}
\sum_{ k=1}^{(1-\epsilon) \log_2X  } k^\alpha N_k(X) \ll &  \left( \log_2 X \right)^\alpha\sum_{ k=1}^{(1-\epsilon) \log_2X  }  N_k(X) \\
                                                                                                          = & \left( \log_2 X \right)^\alpha \underbrace{\sum_{ \substack{ n\leqslant X \\  1 \leqslant \Omega(n) \leqslant (1-\epsilon)\log_2X   } } 1. }_{ =:S_{1,X} }\\ 
\end{align*}
Similarly,
\begin{align*}
\sum_{ k=(1+\epsilon)\log_2X }^{A\log_2X  } k^\alpha N_k(X) \ll & \left( \log_2 X \right)^\alpha \underbrace{\sum_{ \substack{ n\leqslant X \\  (1+\epsilon)\log_2X \leqslant \Omega(n) \leqslant A\log_2X   } } 1 .}_{ =:S_{3,X} }\\ 
\end{align*}
For $X$ large, there exits $n_{0,X}$ such that for $n\geqslant n_{0,X}$  
\[ -C_1 \sqrt{ \log_2 X } \leqslant \frac{ \Omega(n)-\log_2 n }{ \sqrt{ \log_2 n } } \leqslant C_2 \epsilon \sqrt{ \log_2 X }\]
and
\[ C_1' \epsilon \sqrt{ \log_2 X } \leqslant \frac{ \Omega(n)-\log_2 n }{ \sqrt{ \log_2 n } } \leqslant C_2' A \sqrt{ \log_2 X },\]
for some $C_1$, $C_2$, $C_1'$, $C_2'$. Therefore,

\[S_{1,X} \sim  \frac{ X }{ \sqrt{ 2\pi } } \int_{ C_1 \epsilon \sqrt{ \log_2 X } }^{ C_2 A \sqrt{ \log_2 X }} e^{ -\frac{ t^2 }{ 2 } } dt \ll X \sqrt{ \log_2X } e^{ -\frac{ t^2 }{ 2 }  } \big|_{t = C_1\epsilon \sqrt{ \log_2 X }} \ll \frac{ X }{ \left(  \log X \right)^{\epsilon_0} }\]
and
\[S_{3,X}  \ll \frac{ X }{ \left(  \log X \right)^{\epsilon_0'} }\]
for some $\epsilon_0$, $ \epsilon_0'>0 $, respectively.\\

For (iv), by Nicolas's result in \cite{N}, there exists the same constant $C$, such that
 \begin{align*}
\sum_{ k=A\log_2X }^{ \log X/\log 2  } k^\alpha N_k(X) \ll &   \sum_{ k=A\log_2X }^{ \log X/\log 2  } k^\alpha \left\{ \left( \frac{C X }{2^k} \right) \log \left( \frac{X}{ 2^k } \right)  \right\} \\
                                                                                      = & CX \sum_{ k=A\log_2X }^{ \log X/\log 2  } k^\alpha2^{-k} \log \left( \frac{X}{ 2^k } \right).
\end{align*}
We may bound this last sum from above by the integral
\begin{align*}
\int_{ A\log_2X }^{ \log X/\log 2 } t^\alpha 2^{-t} \log \left( \frac{ X }{ 2^t } \right) dt & \leqslant \log X \int_{ A\log_2X }^\infty t^\alpha 2^{-t} dt \\
                                                                                                                                & \leqslant \log X \left( A\log_2 X \right)^\alpha 2^{- A\log_2X } \\
                                                                                                                                & \ll \log X \left( \log_2 X \right)^\alpha \left( \log X  \right)^{ -A\log 2 } \\
                                                                                                                                & \leqslant \left( \log X \right)^{ 1-3\log 2 } \left( \log_2 X \right)^\alpha.
\end{align*}

\end{proof}
For the average order of $\big( \log_j^+\Omega(n) \big)^\alpha$, when carrying through the proof of Proposition \ref{higherorder} for the range (ii), it can be seen from \eqref{kalpha} that the main contribution $\left(\log_{j+2}X\right)^\alpha$ gets more and more centralised when $j$ becomes bigger.
Therefore, we eventually get
\[ \sum_{n\leqslant X }\big( \log_j^+\Omega(n) \big)^\alpha =  X \left( \log_{j+2} X \right)^\alpha + \mathscr{O}\left( X \frac{ \left( \log_{j+2}X\right)^\alpha }{ \log_2 X } \right).\]
      
\section*{acknowledgements}
I thank Ole Fredrik Brevig for introducing me to this topic and giving valuable information during my work. I also thank Winston Heap and Kristian Seip for crucial advice on the proof of the asymptotic approximation. I thank all of them for their kind suggestions on early drafts of this paper.

\end{document}